\newtheorem{theorem}{Theorem}
\newtheorem{lemma}{Lemma}
\newtheorem{prop}{Proposition}
\newtheorem{cor}{Corollary}
\newtheorem{example}{Example}
\def\SO{\operatorname{SO}}
\def\GL{\operatorname{GL}}
\def\SL{\operatorname{SL}}
\def\PSO{\operatorname{PSO}}
\def\Lie{\operatorname{Lie}}
\def\Mat{\operatorname{Mat}}
\def\mod{\operatorname{mod}}
\def\C{\mathbb C}
\def\P{\mathbb P}
\def\g{\mathfrak g}
\def\gl{\mathfrak {gl}}
\def\Z{\mathbb Z}
\def\Gan{\mathbb G_a^n}
\def\exp{\operatorname{exp}}
\def\PGL{\operatorname{PGL}}
\def\Aut{\operatorname{Aut}}
\def\Spec{\operatorname{Spec}}
\def\Pic{\operatorname{Pic}}
\def\so{\mathfrak{so}}
\def\sl{\mathfrak{sl}}
\begin{document}
\title[Automorphisms of the quadric]{Hassett-Tschinkel correspondence and automorphisms of the quadric}
\author{Elena Sharoyko}

\begin{abstract} We study locally transitive actions of the commutative unipotent group $\Gan$ on a nondegenerate quadric in the projective space $\P^{n+1}$. It is shown that for each $n$ such an action is unique up to isomorphism.
\end{abstract}
\maketitle

\begin{section}{Introduction}

This work is devoted to locally transitive actions (i.e. actions with an open orbit) of the commutative unipotent algebraic group $\Gan=\underbrace{\mathbb G_a\times\mathbb G_a\times\ldots\times\mathbb G_a}_{n \mbox { times }}$  on complex projective varieties. Here $\mathbb G_a=(\C,+)$. B.\:Hassett and Yu.\:Tschinkel \cite{H-T} described a remarkable correspondence between locally transitive actions of the group $\Gan$ on the projective space $\P^n$ and complex local commutative algebras of dimension $n+1$. From this construction and classification results in \cite{S-T} it follows that such locally transitive actions form a finite number of equivalence classes if and only if $n\le ~5$. The problem of describing of locally transitive $\Gan$-actions on a nondegenerate quadric $Q_n\subset\P^{n+1}$ was stated in \cite[Question 3-(4)]{H-T}. We show that  for each $n$ there exists a unique locally transitive action of the group $\Gan$ on the quadric $Q_n$ up to isomorphism. To prove this, Hassett-Tschinkel correspondence should be generalized to the actions on quadrics. This way leads us to complex local commutative algebras of dimension $n+2$ with some special features: there is a certain nondegenerate quadratic form on such algebra and the square of the maximal ideal in this algebra is one-dimensional. Let us note that this result cannot be extended to other smooth projective hypersurfaces. If $H_{n,d}$ is a smooth projective hypersurface of degree $d\ge 3$ in $\P^{n+1}$, then the group of its automorphisms is finite except for the case $n=2$, $d=4$ \cite[Thm.~2]{M-M}. In the latter case the automorphism group of the hypersurface is discrete \cite[Thm.~4]{M-M}.     
 
The action we construct also determines the locally transitive $\Gan$-action on the affine quadric. The automorphism groups of affine quadrics are discussed in various works, cf. \cite{D} и \cite{Tot}. 

It is shown in \cite{H-T} that for each $n$ there exists a unique $\Gan$-action on $\P^n$ with finite number of orbits.
It seems interesting to find out the modality of another actions. Here the term modality is used for the maximal amount of parameters in a continuous family of $\Gan$-orbits. In Section 5 there is an example of such calculation in a special case. We will also prove that the modality of the locally transitive action of the group $\Gan$ on $Q_n$ is equal to $n-2$ for $n\ge 2$.  Some results on maximal value of modality over projective (resp., affine) embeddings of a homogeneous space of a reductive group can be found in   \cite{Ah} (resp., \cite{Ar}). There is no corresponding result for projective embeddings of  unipotent groups. 

Each locally transitive $\Gan$-action on $\P^n$ (resp., on $Q_n$) determines a maximal commutative unipotent subgroup of dimension $n$ in $\SL(n+1)$ (resp., $\SO(n+2)$).
The last section contains an example of a maximal commutative unipotent subgroup of dimension $n$ in $\SO(n+2)$ with non-locally transitive action associated to it. We do not have similar examples for the group $\SL(n+1)$.

The author is grateful to I. V. Arzhantsev for posing of the problem and useful discussions, and also to Yu. G. Prokhorov for important comments.

\end{section}
\begin{section}{Hassett-Tschinkel correspondence}

This section briefly recalls the results of \cite{H-T} that establish a correspondence between cyclic representations of the group $\Gan$ and finite-dimensional local algebras.

Let $\rho:\Gan\to\GL_l(\C)$ be a rational representation and $v\in \C^l$ be a fixed cyclic vector, i.e.  $\langle\rho(\Gan)v\rangle=\C^l$. It determines the representation $d\rho:\g\to\gl_l(\C)$  of the Lie algebra $\g$ of the group $\Gan$, which induces the representation $\tau: U(\g)\to\Mat_{l\times l}(\C)$ of the universal enveloping algebra $U(\g)$.  Since the group $\Gan$ is commutative, the algebra $U(\g)$ is naturally isomorphic to the polynomial algebra $\C[S_1,\ldots, S_n]$, where $\g\cong\langle S_1,\ldots, S_n\rangle.$ The subspace $\tau(U(\g))v$ is $\g$- and $\Gan$-invariant; since it contains $v$, we can see that $\tau(U(\g))v=\C^l.$ Let us consider the set $I=\{a\in U(\g):\tau(a)v=0\}$. It is an ideal of $U(\g)$ and $U(\g)/I\cong\tau(U(\g))v=\C^l$. Using this quotation, to each transformation $\tau(a)$ of the space $\C^l$ one can assign  the left multiplication by the element $a\in U(\g)$ in $R=U(\g)/I$. Moreover, the vector $v$ is associated with the class of 1  in the algebra $R$. Since all the operators $\tau(S_1),\ldots, \tau(S_n)$ are nilpotent, the ideal $I$ contains some powers of each generator $S_1,\ldots, S_n$. So the ring $R$ is local with a unique maximal ideal $\mathfrak m_R=\langle S_1,\ldots, S_n\rangle$ and $\dim_\C R=l$.
Conversely, let us associate with $R=\C[S_1,\ldots, S_n]/I$ an $l$-dimensional representation $\rho$ of the group $\Gan$. Here the operator $\rho((\alpha_1, \ldots, \alpha_n))$ is a left multiplication by the element $\exp(\alpha_1 S_1+\ldots+\alpha_nS_n)$ in $R$, and the vector $1\in R$ is the marked one. It is easy to see that the representation $\rho$ is effective if and only if $\langle S_1,\ldots, S_n\rangle\cap I=0.$ In this case we say that the ring $R$ is  {\it nondegenerate}.

Now proceed to an action of the group $\Gan$ in the polynomial ring $\C[x_1,\ldots, x_n]$, where an element $(\alpha_1,\ldots, \alpha_n)$ brings $x_i$ to $x_i+\alpha_i$. Here the basic elements $S_1,\ldots, S_n$ of the algebra $\g$ correspond to the operators $\frac{\partial}{\partial x_1},\ldots, \frac{\partial}{\partial x_n}. $ Consider the subspace 
$$V=\{f\in\C[x_1,\ldots, x_n]:F(f)=0 \quad\forall F\in I\},$$
where by $F(S_1,\ldots, S_n)$ we denote the differential operator $F(\frac{\partial}{\partial x_1},\ldots, \frac{\partial}{\partial x_n})$. From the proof of \cite[Prop. 2.11]{H-T} it follows that the dimension of the space $V$ is equal to $l$ and the induced representation  of the group $\Gan$ on $V$ is dual to $\rho$. Conversely,  to each $\Gan$-invariant subspace $V\subset \C[x_1,\ldots, x_n]$ of dimension $l$ one can assign an ideal  
$I$ in $\C[S_1,\ldots, S_n]$, namely  $I=\{F:F(f)=0\quad \forall f\in V\}$, and a representation $\rho$ of the group $\Gan$ in the space $R=\C[S_1,\ldots, S_n]/I$. The structure of this representation was described before.

The algebra $R$ is nondegenerate if and only if  no differential operator of first order  annihilates the space $V$. This condition is also equivalent to the following property: the transcendency degree of a subalgebra in $\C[x_1,\ldots, x_n]$ generated by $V$ is equal to $n$. We call such subspaces  $V\subset \C[x_1,\ldots, x_n]$  {\it nondegenerate}. The following result can be obtained in this way:

\begin{theorem}\label{HTosn}\cite[Th. 2.14]{H-T} There is a one-to-one correspondence among the following:
\begin{enumerate}
\item isomorphism classes of pairs $(\rho, v)$ such that $\rho:\Gan\to\GL_l(\C)$ is an effective rational representation and $v\in\C^l$ is a cyclic vector;
\item nondegenerate subschemes $\Spec R \subset\Spec\C[S_1,\ldots, S_n]$ of length $l$ supported at the origin;
\item classes of nondegenerate subspaces $V\subset\C[x_1,\ldots, x_n]$ of dimension $l$, which are invariant under the isomorphisms  $x_i\mapsto x_i+\alpha_i$, $\alpha_i\in\C$, of the ring $\C[x_1,\ldots, x_n]$; the equivalence is  induced by linear transformations of the space $\langle x_1,\ldots, x_n\rangle$.
\end{enumerate} 
\end{theorem}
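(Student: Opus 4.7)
The plan is to verify the two correspondences (1)$\leftrightarrow$(2) and (2)$\leftrightarrow$(3) separately; the constructions going both ways have essentially been given in the preceding paragraphs, so the work is to confirm that they are mutually inverse and to match up the nondegeneracy conditions.

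For (1)$\leftrightarrow$(2) the ingredients are already present. Given $(\rho,v)$, the ideal $I=\{a\in U(\g):\tau(a)v=0\}$ produces the local Artinian algebra $R=U(\g)/I$ of length $l$, with cyclic vector corresponding to $1\in R$. Conversely, given $R$, the map $(\alpha_1,\ldots,\alpha_n)\mapsto \exp(\alpha_1 S_1+\ldots+\alpha_n S_n)$ is a well-defined algebraic homomorphism into $\GL(R)$ because $\mathfrak m_R$ is nilpotent, and $1\in R$ is cyclic since $R=U(\g)\cdot 1$. These constructions are mutually inverse once one observes that isomorphisms of pairs $(\rho_1,v_1)\to(\rho_2,v_2)$ (intertwiners carrying $v_1$ to $v_2$) are the same data as $U(\g)$-algebra isomorphisms $R_1\to R_2$. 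Effectiveness of $\rho$ translates to $\g\cap I=0$, which is precisely the nondegeneracy of $R$.

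For (2)$\leftrightarrow$(3) I would invoke the Macaulay (apolarity) duality. The pairing
$$\langle F,f\rangle := (F(f))(0),\qquad F\in\C[S_1,\ldots,S_n],\ f\in\C[x_1,\ldots,x_n],$$
is perfect in the sense that the monomials $\prod_i S_i^{a_i}$ and $\prod_i x_i^{a_i}/a_i!$ form dual bases; hence it identifies $\C[x_1,\ldots,x_n]$ with the graded dual of $\C[S_1,\ldots,S_n]$. Under this identification, for an $\mathfrak m$-primary ideal $I$ of colength $l$ the orthogonal $V=I^\perp$ is an $l$-dimensional subspace. Translation invariance of $V$ is equivalent to stability under every partial derivative (expand $f(x+\alpha)=\exp(\sum \alpha_j\partial_j)f$ and read off coefficients in $\alpha$), and hence to $V$ being a $\C[S_1,\ldots,S_n]$-submodule of $\C[x_1,\ldots,x_n]$; its annihilator $V^\perp$ is then automatically an ideal. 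The $\GL_n$-equivariance of the pairing matches the equivalence classes on both sides.

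Finally, the three nondegeneracy conditions agree: a first-order operator $L=\sum c_i S_i$ annihilates $V$ iff $L\in I$, i.e.\ iff the linear form $L$ vanishes in $R$; and the equivalence with $\C[V]$ having transcendence degree $n$ follows because a nontrivial relation $\sum c_i\partial_i f\equiv 0$ on $V$ is a nontrivial linear dependence among $dx_1,\ldots,dx_n$ on the image of the map $\Spec\C[V]\to\C^n$. I expect the main technical obstacle to lie in Macaulay duality itself --- checking the perfectness of the apolarity pairing between the relevant finite-dimensional pieces and the clean identification of $\mathfrak m$-primary ideals of colength $l$ with finite-dimensional translation-stable subspaces of $\C[x_1,\ldots,x_n]$; getting the equivalence relations to correspond under the $\GL_n$-actions also requires some bookkeeping.
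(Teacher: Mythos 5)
Your proposal is correct and follows essentially the same route as the paper, which gives no self-contained proof but recalls exactly these two constructions --- the ideal $I=\{a\in U(\g):\tau(a)v=0\}$ for (1)$\leftrightarrow$(2) and the annihilator pairing between $\C[S_1,\ldots,S_n]$ and $\C[x_1,\ldots,x_n]$ via differential operators for (2)$\leftrightarrow$(3) --- before citing \cite[Th.~2.14]{H-T}. Your explicit use of Macaulay/apolarity duality with the dual bases $\prod_i S_i^{a_i}$ and $\prod_i x_i^{a_i}/a_i!$ is the standard way to make precise the step the paper delegates to the proof of \cite[Prop.~2.11]{H-T}.
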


An effective linear representation $\rho:\Gan\to\GL_l(\C)$ with a marked cyclic vector $v\in\C^l$ determines an action of the group $\Gan$ on the space $\P^{l-1}$ with an open orbit $\Gan\langle v\rangle$ if and only if $n=l-1.$ In this case the images of the generators $S_1,\ldots, S_n$ form a $\C$-basis in the maximal ideal $\mathfrak m_R$ of the ring $R=\C[S_1,\ldots, S_n]/I$.  So in this case there is no need to fix the realisation of $R$ as a quotient-ring.

\begin{theorem}\label{HTloc}\cite[Prop. 2.15]{H-T} The following are equivalent:
\begin{enumerate}
\item equivalence classes of locally transitive actions of the group $\Gan$ on $\P^n$;
\item local algebras of $\C$-dimension $n+1$, up to isomorphism.
\end{enumerate}
\end{theorem}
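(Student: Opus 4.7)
The plan is to deduce Theorem~\ref{HTloc} from Theorem~\ref{HTosn} by exploiting the observation, noted just above, that when $l=n+1$ the images of $S_1,\dots,S_n$ form a $\C$-basis of $\mathfrak m_R$. In this regime a presentation of $R$ as $\C[S_1,\dots,S_n]/I$ is nothing more than a choice of basis of $\mathfrak m_R$, so the rigid data of Theorem~\ref{HTosn} should collapse to the abstract isomorphism class of $R$, once the passage between linear and projective actions has been carried out.

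From an action to an algebra, I would proceed as follows. Any regular action of the connected group $\Gan$ on $\P^n$ preserves the generator $\mathcal O(1)$ of $\Pic(\P^n)=\Z$; since $\Gan$ is unipotent and has no nontrivial algebraic characters, $\mathcal O(1)$ carries a unique $\Gan$-linearisation, producing a linear representation $\rho:\Gan\to\GL_{n+1}(\C)$ that lifts the action. A dimension count forces any locally transitive action of $\Gan$ to be effective, so $\rho$ is faithful. Lift any $[v_0]$ from the open orbit to $v_0\in\C^{n+1}$; the $\Gan$-stable subspace $\langle\rho(\Gan)v_0\rangle$ projects to a closed $\Gan$-invariant subvariety of $\P^n$ that contains the open orbit, hence fills $\P^n$, so the subspace equals $\C^{n+1}$ and $v_0$ is cyclic. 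Theorem~\ref{HTosn} then yields a nondegenerate local algebra $R$ of dimension $n+1$.

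For the reverse direction, given a local algebra $R$ with $\dim_\C R=n+1$, I would choose any basis $s_1,\dots,s_n$ of $\mathfrak m_R$ and let $I$ be the kernel of the surjection $\C[S_1,\dots,S_n]\to R$, $S_i\mapsto s_i$. Linear independence of the $s_i$ gives $I\cap\langle S_1,\dots,S_n\rangle=0$, so $R$ is nondegenerate and Theorem~\ref{HTosn} returns an effective $(\rho,v)$. Local transitivity at $[v]$ then follows from a tangent-space computation: identifying $\C^{n+1}$ with $R$ and $v$ with $1$, the differential of the orbit map is $\alpha\mapsto\sum\alpha_i s_i$, so $T_v(\Gan\cdot v)=\mathfrak m_R$; together with $\C\cdot 1$ this exhausts $R$, producing an $n$-dimensional tangent space for $\Gan\cdot[v]$ in $\P^n$.

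The main obstacle is matching the two notions of equivalence. A change of basis of $\mathfrak m_R$ by some $A\in\GL_n(\C)$ reparametrises $\Gan$ and conjugates the induced representation, giving the same embedded subgroup $\rho(\Gan)\subset\PGL(n+1)$ and hence the same $\P^n$-action up to projective equivalence. On the representation side, two lifts of a single $\P^n$-action to $\GL_{n+1}(\C)$ differ by a character of $\Gan$, which must be trivial; and the remaining ambiguity in the cyclic vector comes from a nonzero scalar together with some $\rho(\alpha)$, because the stabiliser of an open-orbit point in the connected unipotent group $\Gan$ is zero-dimensional and therefore trivial. Checking that these ambiguities correspond exactly to the $\GL_l(\C)$-equivalences of Theorem~\ref{HTosn} yields the bijection claimed.
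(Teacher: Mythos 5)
Your proposal is correct and follows essentially the route the paper indicates: the paper derives this statement from Theorem~\ref{HTosn} by observing that local transitivity forces $l=n+1$ (so that the $S_i$ form a basis of $\mathfrak m_R$ and the presentation of $R$ as a quotient need not be fixed) together with the unique lifting of projective representations $\Gan\to\PGL_{l}(\C)$ to linear ones. Your write-up merely fills in the details (linearisation of $\mathcal O(1)$, the tangent-space computation, and the matching of equivalences) that the paper leaves to the citation of Hassett--Tschinkel.
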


Let us note that any algebraic homomorphism $\rho:\Gan\to\PGL_l(\C)$ can be uniquely extended to a rational representation $\tilde\rho:\Gan\to\GL_l(\C)$. This explains the implication $(1)\Rightarrow (2)$.

All local finite-dimensional algebras of degree $\le 5$ are listed in \cite{H-T}. Let us mention an inaccuracy in this list. Proposition 3.4 in \cite{H-T} says that there are ten local  algebras of dimension 5 and specifies them all up to isomorphism:
\medskip
\newline $R_1=\C[S_1,S_2,S_3, S_4]/(S_1^2-S_2, S_1S_2-S_3,S_1S_3-S_4, S_2S_3, S_1S_4)$, 
\newline $R_2=\C[S_1,S_2,S_3, S_4]/(S_1^2-S_3, S_1S_2, S_2^2, S_1S_3-S_4, S_1S_4)$,
\newline $R_3=\C[S_1,S_2,S_3, S_4]/(S_1^2-S_3, S_1S_2, S_2^2-S_3, S_1S_3-S_4, S_1S_4)$, 
\newline $R_4=\C[S_1,S_2,S_3, S_4]/(S_1^2-S_3, S_1S_2-S_3, S_2^2, S_1S_3-S_4, S_1S_4)$, 
\newline $R_5=\C[S_1,S_2,S_3, S_4]/(S_1^2-S_3, S_1S_2-S_4, S_2^2, S_1S_3, S_2S_3, S_1S_4)$, 
\newline $R_6=\C[S_1,S_2,S_3, S_4]/(S_1^2-S_3, S_2^2-S_4, S_1S_2, S_1S_3, S_2S_4)$, 
\newline $R_7=\C[S_1,S_2,S_3, S_4]/(S_1^2-S_4, S_1S_2,S_1S_3, S_2S_3-S_4, S_2^2, S_3^2, S_1S_4)$, 
\newline $R_8=\C[S_1,S_2,S_3, S_4]/(S_1^2, S_2^2, S_3^2, S_1S_2-S_4,S_1S_3, S_2S_3)$, 
\newline $R_9=\C[S_1,S_2,S_3, S_4]/(S_1^2-S_4, S_2^2, S_3^2, S_1S_2, S_1S_3, S_2S_3, S_1S_4)$, 
\newline $R_{10}=\C[S_1,S_2,S_3, S_4]/(S_iS_j | i,j=1,\ldots, 4)$.
\medskip
\newline By $I_j$ we denote an ideal corresponding to the algebra $R_j$. Then it is easy to see that both ideals $I_3$ and $I_4$ contain $S_4$.  Indeed, in $R_3$ we have $S_4=S_1S_3=S_1S_2^2=S_2(S_1S_2)=0$, and in $R_4$ one can see that $S_4=S_1S_3=S_1^2S_2=S_2S_3=S_1S_2^2=0$. It means that the dimension of $R_3$ and $R_4$ cannot exceed 4. Now let us find out which algebras satisfy the conditions $\dim \mathfrak m_R=4$, $\dim \mathfrak m_R^2=2, \dim \mathfrak m_R^3=1$ (\cite{H-T} claims that these are $R_2, R_3$ and $R_4$).

\begin{lemma}\label{error} There are two local algebras of dimension 5 satisfying the conditions $\dim \mathfrak m_R=4$, $\dim \mathfrak m_R^2=2$, $\dim \mathfrak m_R^3=1$. They can be written as follows: 
$$R_2=\C[S_1,S_2,S_3, S_4]/(S_1^2-S_3, S_1S_2, S_2^2, S_1S_3-S_4, S_1S_4)$$
and
$$R_3'=\C[S_1,S_2,S_3, S_4]/(S_1^2-S_3, S_1S_2, S_2^2-S_4, S_1S_3-S_4, S_2S_3, S_1S_4).$$
\end{lemma}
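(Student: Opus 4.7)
The plan is to exploit the graded structure of $R$ and reduce to an analysis of the multiplication map on $\mathfrak m_R/\mathfrak m_R^2$. The hypotheses force the Hilbert function of the associated graded algebra to be $(1,2,1,1)$, so $R$ is generated by two elements $S_1,S_2\in\mathfrak m_R$ whose residues span $\mathfrak m_R/\mathfrak m_R^2$, and $\mathfrak m_R^4=0$. The central object is the symmetric bilinear form
\[
B:\mathfrak m_R/\mathfrak m_R^2\times\mathfrak m_R/\mathfrak m_R^2\to\mathfrak m_R^2/\mathfrak m_R^3,\qquad (\bar a,\bar b)\mapsto\overline{ab},
\]
taking values in a one-dimensional space and necessarily nonzero.

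First I would rule out the possibility that $B$ has rank $2$. Diagonalizing over $\C$, one may then assume $S_1^2\equiv T$, $S_2^2\equiv T$, $S_1S_2\equiv 0\pmod{\mathfrak m_R^3}$ for some lift $T$ of a generator of $\mathfrak m_R^2/\mathfrak m_R^3$. Since $\mathfrak m_R\cdot\mathfrak m_R^3=0$, the identity $S_1^2S_2=S_1(S_1S_2)$ gives $S_2T=0$, and symmetrically $S_1T=0$. As $\mathfrak m_R^3=\mathfrak m_R\cdot\mathfrak m_R^2$ is spanned by $S_1T$ and $S_2T$, this yields $\mathfrak m_R^3=0$, contradicting $\dim\mathfrak m_R^3=1$.

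Thus $B$ has rank $1$. Choosing $S_2$ to span the kernel of $B$ and $S_1$ to complete a basis, one has $S_1^2\not\equiv 0$ while $S_1S_2,S_2^2\in\mathfrak m_R^3$; a short calculation analogous to the one above shows that $S_1^3$ generates $\mathfrak m_R^3$. Writing $S_1S_2=\lambda S_1^3$, the substitution $S_2\mapsto S_2-\lambda S_1^2$ preserves the chosen basis of $\mathfrak m_R/\mathfrak m_R^2$ and enforces $S_1S_2=0$ exactly. The only remaining freedom is the scalar $\mu\in\C$ with $S_2^2=\mu S_1^3$: the case $\mu=0$ gives the presentation of $R_2$ after setting $S_3=S_1^2$, $S_4=S_1^3$, and the case $\mu\ne 0$, after the rescaling $S_2\mapsto\mu^{-1/2}S_2$, gives $S_2^2=S_1^3$ and recovers $R_3'$.

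To confirm that two different algebras arise, I would compare the socle $\mathrm{Ann}_R(\mathfrak m_R)$: in $R_2$ it is two-dimensional, spanned by $S_2$ and $S_1^3$, while in $R_3'$ it is one-dimensional, spanned by $S_1^3=S_2^2$. Hence $R_2\not\cong R_3'$. The delicate steps are the rank-$2$ exclusion, where one must keep careful track of the vanishing of $\mathfrak m_R\cdot\mathfrak m_R^3$, and the verification that the quadratic substitution in $S_2$ respects all the required vanishing conditions; the remaining case analysis is routine.
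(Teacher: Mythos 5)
Your proof is correct, and it takes a genuinely different route from the paper's. The paper works entirely in coordinates: it fixes $\mathfrak m_R^2=\langle S_3,S_4\rangle$, $\mathfrak m_R^3=\langle S_4\rangle$, writes all products with unknown structure constants $a,\dots,h$, normalizes $g\neq 0$ by a genericity argument, and then runs a case analysis on $h$ and $f$, using associativity identities such as $S_1^2S_2=S_1(S_1S_2)$ together with explicit linear substitutions to reach the two normal forms; the two algebras are finally distinguished by whether some $S\in\mathfrak m_R$ has $S^2\in\mathfrak m_R^3\setminus\{0\}$. You instead organize the whole argument around the rank of the induced symmetric form $B:\mathfrak m_R/\mathfrak m_R^2\times\mathfrak m_R/\mathfrak m_R^2\to\mathfrak m_R^2/\mathfrak m_R^3$: the rank-$2$ exclusion (which in the paper is implicit in the deduction $c=e=0$, $a\neq 0$) becomes a clean standalone step, the rank-$1$ case immediately yields an adapted basis, and the residual parameter $\mu$ with $S_2^2=\mu S_1^3$ is normalized to $0$ or $1$. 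The underlying computations are the same (everything rests on $\mathfrak m_R^4=0$ and associativity), but your version has fewer cases and makes the invariant content visible, at the cost of being less explicit about structure constants. Your non-isomorphism check via $\dim\operatorname{Ann}_R(\mathfrak m_R)$ ($2$ for $R_2$, $1$ for $R_3'$) is a valid alternative to the paper's criterion, and it is genuinely needed in your setup since the vanishing of $\mu$ is not a priori basis-independent. One small point common to both arguments: strictly speaking one should also confirm that both normal forms are realized by actual $5$-dimensional algebras (e.g.\ $\C[x,y]/(x^4,xy,y^2)$ and $\C[x,y]/(xy,\,x^3-y^2)$), which neither you nor the paper spells out, but this is routine.
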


\begin{proof}
Suppose that $\mathfrak m_R^3=\langle S_4\rangle$ and $\mathfrak m_R^2=\langle S_3, S_4\rangle$. Then $S_iS_4=0$ for all $i$ and $S_3^2=0$. We need to determine the products of other basic vectors. Let us write these products as follows:
$$S_1^2=aS_3+bS_4, S_1S_2=cS_3+dS_4, S_2^2=eS_3+fS_4, S_1S_3=gS_4, S_2S_3=hS_4.$$
Without loss of generality one can put $g\ne 0$, because $S_3\in \mathfrak m_R^2$ and the subset $\{S\in\langle S_1, S_2\rangle: SS_3\ne 0 \}$ is dense in $\langle S_1, S_2\rangle$. Consider three cases.
\smallskip

a) Suppose that $h=0, f\ne 0$. Then we have
$$0=(aS_3+bS_4)S_2=S_1^2S_2=S_1(cS_3+dS_4)=cgS_4,$$
which means that $c=0$. In a similar way, one can see that
$$0=(cS_3+dS_4)S_2=S_1S_2^2=S_1(eS_3+fS_4)=egS_1,$$
hence $e=0$. So the relations in our algebra can be simplified in the following way:
$$S_1^2=aS_3+bS_4, S_1S_2=dS_4, S_2^2=fS_4, S_1S_3=gS_4, S_2S_3=0.$$
The square of the ideal is two-dimensional, hence $a\ne 0$. 
Let us replace $S_1$ by $\gamma S_1+\delta S_2$ such that $\gamma d+\delta f=0$; then $S_1S_2=0$. One can choose $\gamma\ne 0$, since $f\ne 0$. Then let us replace $S_3$ by the linear combination of elements  $S_3$ and $S_4$ such that $S_1^2=S_3$. Now the relations can be rewritten once more:
$$S_1^2=S_3, S_1S_2=0, S_2^2=fS_4, S_1S_3=\gamma^3agS_4, S_2S_3=0.$$
We can get $\gamma^3ag=1$ and $f=0$ or $f=1$ by multiplying $S_i$ by appropriate scalars. In these cases one obtains two algebras described earlier. 
\smallskip

b) Suppose that $h=f=0$. As in a), we have $c=e=0$ and  the relations can be written in the following way:
$$S_1^2=aS_3+bS_4, S_1S_2=dS_4, S_2^2=0, S_1S_3=gS_4, S_2S_3=0.$$
Let us replace $S_1$ by $kS_1$, $S_2$ by $gS_2-dS_3$, $S_3$ by $k^2aS_3+k^2bS_4$, where $agk^3=1$. Then 
$$S_1^2=S_3, S_1S_2=0, S_2^2=0, S_1S_3=S_4, S_2S_3=0,$$
hence the algebra is isomorphic to $R_2$.
\smallskip

c) Suppose $h\ne 0$. Let us replace $S_2$ by $\kappa S_1+\lambda S_2$, where $\kappa g+\lambda h=0$ and $\lambda\ne 0$. Then $S_2S_3=0$ and we come to the situation a).

It is also necessary to check that the algebras $R_2$ and $R_3'$ are not isomorphic to each other. Note that there is  an element  $S$ in $R_3'$ such that $S^2\in\mathfrak m_R^3\setminus\{0\}$ and there is no such element in $R_2$.
\end{proof} 

It is easy to check that the other algebras from the list are of dimension 5. In each case we can  construct a subalgebra in $\Mat_{5\times 5}(\C)$ of dimension 5 with specified relations among its generators.

\end{section}

\begin{section}{Local finite-dimensional algebras and actions on the quadric}

In this section some useful facts will be proved. It will let us reformulate our problem and use methods of Hassett and Tschinkel. It is important for listing local finite-dimensional algebras, which correspond to locally transitive actions on the quadric.

Begin with the following well-known result. We shall prove it as we do not have an accurate reference. 

\begin{lemma}\label{aut}
$\Aut Q_n=\PSO(n+2)$.
\end{lemma}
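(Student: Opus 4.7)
The plan has two steps: extend any $\phi\in\Aut Q_n$ to a projective transformation of the ambient $\P^{n+1}$, and then identify the resulting subgroup of $\PGL(n+2)$ with $\PSO(n+2)$.

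\textbf{Step 1 (extension to a projective transformation).} For $n\ge 3$ one has $\Pic(Q_n)=\Z\cdot H$, where $H$ is the hyperplane class; any automorphism $\phi$ preserves ampleness, so $\phi^*H=H$. The ideal sheaf sequence
$$0\to\mathcal O_{\P^{n+1}}(-1)\to\mathcal O_{\P^{n+1}}(1)\to\mathcal O_{Q_n}(1)\to 0$$
identifies $H^0(Q_n,\mathcal O_{Q_n}(1))$ with $H^0(\P^{n+1},\mathcal O(1))$, and the embedding $Q_n\hookrightarrow\P^{n+1}$ is the complete linear system $|H|$, so $\phi$ extends to an element of $\PGL(n+2)$. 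The cases $n=1,2$ I would handle directly: $Q_1$ is a plane conic with $\Aut Q_1=\PGL_2$ lifting into $\PGL_3$ via the symmetric-square representation of $\mathrm{SL}_2$, and $Q_2\cong\P^1\times\P^1$ has its two-component automorphism group $(\PGL_2\times\PGL_2)\rtimes\Z/2$ realised by linear transformations of $\P^3$ via the Segre embedding (the swap of rulings becomes the transposition of two Segre coordinates).

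\textbf{Step 2 (identification with the orthogonal group).} Let $q$ be a defining quadratic form for $Q_n$. If $g\in\GL(n+2)$ projects to an element of $\PGL(n+2)$ preserving $Q_n$ set-theoretically, then $g^*q$ vanishes on $Q_n$, and since $q$ is an irreducible quadric we get $g^*q=\lambda q$ for some $\lambda\in\C^*$. Replacing $g$ by $\lambda^{-1/2}g$ places it in $\mathrm O(q)\cong\mathrm O(n+2)$, so the projectivization of $\phi$ lies in the image of $\mathrm O(n+2)$ in $\PGL(n+2)$. Passing to the subgroup of determinant one (using that $-I$ acts trivially on $\P^{n+1}$, and noting that the sign of the determinant can be adjusted within the same $\PGL$-class using the convention of $\PSO(n+2)$ adopted in the paper) then gives the desired equality with $\PSO(n+2)$; the reverse inclusion $\PSO(n+2)\subseteq\Aut Q_n$ is immediate.

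The step I expect to be the main obstacle is Step 1 in the small-dimensional cases $n=1,2$, where the Picard group is not cyclic: one has to check by hand that the extra connected component of $\Aut(Q_2)$, corresponding to the ruling-swap of $\P^1\times\P^1$, is genuinely induced by a linear transformation of $\P^3$ preserving the Segre quadric, and that the same Picard argument goes through for the Veronese embedding of $Q_1\cong\P^1$ into $\P^2$.
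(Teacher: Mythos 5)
Your proof is correct in substance and follows the same route as the paper: the Picard--group argument showing that hyperplane sections go to hyperplane sections, hence that every automorphism of $Q_n$ is induced by a projective--linear transformation of $\P^{n+1}$. The paper leaves your Step 2 implicit, and your treatment of $n=1,2$ via the conic and the Segre quadric fills in what the paper only asserts ("the last statement is also true if $n=1$ or $n=2$"). One caveat on Step 2: the remark that the sign of the determinant can be adjusted within a $\PGL$-class is valid only when $n+2$ is odd, since $\det(-E)=(-1)^{n+2}$. When $n+2$ is even, the group of linear automorphisms of $Q_n$ is $\operatorname{PO}(n+2)=\mathrm{O}(n+2)/\{\pm E\}$, which contains $\PSO(n+2)$ with index $2$; your own example of the ruling swap on $Q_2\cong\P^1\times\P^1$ lies outside $\PSO(4)\cong\PGL_2\times\PGL_2$. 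This is really an imprecision in the statement of the lemma itself, shared by the paper (whose proof also stops at the linear extension and never addresses determinants); it is harmless for the rest of the paper, since the connected group $\Gan$ necessarily lands in the identity component $\PSO(n+2)$.
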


\begin{proof} Let us note that if $n\ge 3$, then the Picard group of the quadric $\Pic Q_n\cong\Z$ is generated by the line bundle $\mathcal O(1)$. Any automorphism of the quadric $Q_n$ induces the automorphism of its Picard group. This map could bring the generator $\mathcal O(1)$ either to $\mathcal O(1)$ or to $\mathcal O(-1)$, but the last line bundle does not have global sections. So any hyperplace section of the quadric comes again to a hyperplane section. The last statement is also true if $n=1$ or $n=2$. In this way to each automorphism of the quadric  we assign a transformation of the space  $\P(H^0(\mathcal O(1)))=(\P^{n+1})^*$ of hyperplane sections of the quadric. The conjugate transformation of $\P^{n+1}$ extends the original automorphism on this space.
\end{proof}

\begin{cor} Any action of the group $\Gan$ on the quadric $Q_n\subset\P^{n+1}$ can be extended to an orthogonal representation of the group $\Gan$ in the space $\C^{n+2}$.
\end{cor}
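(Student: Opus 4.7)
The plan is to combine Lemma \ref{aut} with the lifting statement noted after Theorem \ref{HTloc} and then to verify that the resulting linear representation actually preserves a nondegenerate quadratic form on $\C^{n+2}$.

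By Lemma \ref{aut} an action of $\Gan$ on $Q_n$ gives an algebraic homomorphism $\rho:\Gan\to\PSO(n+2)\subset\PGL_{n+2}(\C)$; the remark after Theorem \ref{HTloc} extends $\rho$ uniquely to a rational representation $\tilde\rho:\Gan\to\GL_{n+2}(\C)$. Since the preimage of $\PSO(n+2)$ under $\GL_{n+2}(\C)\to\PGL_{n+2}(\C)$ is $\C^{*}\cdot\SO(n+2)$, the image of $\tilde\rho$ lies in this subgroup.

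It remains to eliminate the scalar factor, which I view as the main (though elementary) obstacle. As $\Gan$ is unipotent, every operator $\tilde\rho(g)$ is unipotent. Writing $\tilde\rho(g)=\lambda B$ with $B\in\SO(n+2)$ and $\lambda\in\C^{*}$, the spectrum of $B$ is stable under $\mu\mapsto\mu^{-1}$; the requirement that $\lambda\mu=1$ for every eigenvalue $\mu$ of $B$ forces $\lambda^{2}=1$, so $\tilde\rho(\Gan)\subset\operatorname{O}(n+2)$. By connectedness of $\Gan$ and $\tilde\rho(0)=I$, the image lies in the identity component $\SO(n+2)$, which gives the desired orthogonal representation on $\C^{n+2}$.
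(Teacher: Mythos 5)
Your proof is correct and follows the route the paper intends: the paper states this as an immediate consequence of Lemma \ref{aut} together with the remark after Theorem \ref{HTloc} that a homomorphism $\Gan\to\PGL_l(\C)$ lifts uniquely to $\GL_l(\C)$, and gives no further argument. Your elimination of the scalar factor via the inversion-symmetry of the spectrum of an orthogonal operator is valid and fills in the one step the paper leaves implicit (it could be done slightly more directly by noting that $g\mapsto\lambda^2$, the factor by which $\tilde\rho(g)$ scales the quadratic form, is a character of the unipotent group $\Gan$ and hence trivial).
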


Let us remark that the similar problem of extending the action on an affine quadric is extremely hard, see \cite{D}.

We say that {\it $n$-factor} is a factoralgebra of the polynomial algebra $\C[S_1,\ldots, S_n]$ with linearly independent images of generators $S_1,\ldots, S_n$. Let us note that an algebra can be represented as an $n$-factor if it is generated by $n$ linearly independent elements. Consider a pair $(\rho,v)$, where $\rho$ is a faithful representation of the group $\Gan$ in the space $V=\C^{n+2}$ and $v$ is a cyclic vector in this representation. To this pair assign a local algebra $R$ of dimension $n+2$, using the  Hassett-Tschinkel correspondence. This algebra is an $n$-factor. The images of the generators $S_i$ in $R$ are linearly independent as far as the representation $\rho$ is faithful. 

The $\C$-basis $\{\mu_1,\ldots,\mu_{n+2}\}$ of an $n$-factor $R$ is called {\it proper} if  $\mu_1=1$, the element $\mu_{n+2}$ can be expressed as a polynomial of other basic vectors and any power of the maximal ideal $\mathfrak m_R$ is spanned by some of those basic vectors. In the sequel we shall need basis of this kind.  

\begin{lemma}\label{factor}
Any local algebra of dimension $n+2$ supported at the origin  except for $B_n=\C[S_1,\ldots, S_{n+1}]/(S_iS_j | i,j=1,\ldots, n+1)$ is  an $n$-factor.
\end{lemma}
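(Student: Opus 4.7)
\medskip

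\noindent\textbf{Proof proposal.} The plan is to work with the dimension of $\mathfrak m_R/\mathfrak m_R^2$ as the key invariant. Since $R$ is a local $\C$-algebra of dimension $n+2$, we have $\dim_\C\mathfrak m_R=n+1$. Set $k=\dim_\C\mathfrak m_R/\mathfrak m_R^2$; by the Nakayama lemma for the local ring $R$, the integer $k$ is the minimal number of algebra generators of $\mathfrak m_R$ (and hence of $R$ as a $\C$-algebra). Clearly $1\le k\le n+1$, and $\dim_\C\mathfrak m_R^2=n+1-k$.

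I would then split into two cases according to the value of $k$. The case $k=n+1$ corresponds to $\mathfrak m_R^2=0$: any basis $s_1,\ldots,s_{n+1}$ of $\mathfrak m_R$ satisfies $s_is_j=0$ for all $i,j$, so mapping $S_i\mapsto s_i$ yields an isomorphism $R\cong\C[S_1,\ldots,S_{n+1}]/(S_iS_j\mid i,j=1,\ldots,n+1)=B_n$. This is the exceptional algebra excluded from the statement, and it is clearly not an $n$-factor, since $R$ cannot be generated by fewer than $n+1$ elements.

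In the remaining case $k\le n$, I lift a basis of $\mathfrak m_R/\mathfrak m_R^2$ to linearly independent elements $s_1,\ldots,s_k\in\mathfrak m_R$; by Nakayama these generate $\mathfrak m_R$ as an ideal, hence generate $R$ as a $\C$-algebra. To obtain an $n$-factor I extend this set to a collection of $n$ linearly independent elements of $\mathfrak m_R$ by choosing $s_{k+1},\ldots,s_n\in\mathfrak m_R^2$ that are linearly independent modulo $\langle s_1,\ldots,s_k\rangle$. This extension is possible because $\dim_\C\mathfrak m_R^2=n+1-k\ge n-k$. Adjoining elements of $\mathfrak m_R^2$ does not change the subalgebra generated, so $s_1,\ldots,s_n$ still generate $R$; sending $S_i\mapsto s_i$ produces a surjection $\C[S_1,\ldots,S_n]\twoheadrightarrow R$ with linearly independent images of the generators, which is exactly the condition defining an $n$-factor.

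The only place that requires any real care is the bookkeeping in the last step, namely verifying that $\dim_\C\mathfrak m_R^2$ is always large enough to supply the missing $n-k$ elements; since $n+1-k\ge n-k$ this is automatic, so there is no genuine obstacle. The argument also shows where the exceptional algebra $B_n$ naturally comes from: it is precisely the unique (up to isomorphism) local algebra of dimension $n+2$ with $\mathfrak m_R^2=0$.
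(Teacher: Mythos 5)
Your argument is correct, and it reaches the conclusion by a genuinely different route than the paper. The paper proves the contrapositive: assuming $R$ is not an $n$-factor, it observes that for any $f_1,f_2\in\mathfrak m_R$ the elements $1,f_1,f_2,f_1f_2$ must be linearly dependent (otherwise one extends them to a basis of $R$ and discards $1$ and $f_1f_2$ to obtain $n$ linearly independent algebra generators), and then a short case analysis ($f_1f_2=f_1+f_2$, $f_1f_2=f_1$, $f_1f_2=0$), using the nilpotency of $\mathfrak m_R$ to kill the first two cases, forces every product in $\mathfrak m_R$ to vanish, i.e.\ $R\cong B_n$. You instead argue directly through the invariant $k=\dim_\C\mathfrak m_R/\mathfrak m_R^2$: Nakayama gives $k$ algebra generators, and the count $\dim_\C\mathfrak m_R^2=n+1-k\ge n-k$ lets you pad these with linearly independent elements of $\mathfrak m_R^2$ to reach exactly $n$ linearly independent generators whenever $k\le n$, while $k=n+1$ is precisely the excluded algebra $B_n$. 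Both proofs rest on the same padding trick (adjoining superfluous linearly independent elements of the maximal ideal does not change the subalgebra generated), but your version is more systematic: it replaces the paper's ad hoc case analysis by a one-line dimension count and makes transparent that $B_n$ is the unique local algebra of dimension $n+2$ with $\mathfrak m_R^2=0$, hence the unique exception.
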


\begin{proof}
Suppose an algebra $R$ of dimension $n+2$ is not an $n$-factor. Then the product of any two elements in the maximal ideal is a linear combination of the factors. Indeed, if not, then the elements   $1, f_1, f_2, f_1f_2$ (or $1, f_1, f_1^2$) are linearly independent. Let us add some more vectors to obtain a basis of the algebra $R$. Then the element $f_1f_2$ (or $f_1^2$ respectively) can be expressed as a polynomial of other basic elements, hence the algebra is an $n$-factor. Consider three cases: $f_1f_2=f_1+f_2$ (maybe we have to replace $f_1$ and $f_2$ by their multiples), $f_1f_2=f_1$ and $f_1f_2=0$. In the first two cases any power of the maximal ideal has to contain the element $f_1+f_2$ or $f_1$ respectively, which contradicts to the description of our algebra. So the product of any two elements in the maximal ideal is zero, which proves the statement. 
\end{proof}

\begin{lemma}\label{ort}
Suppose a local algebra $R$ of dimension $n+2$ is an $n$-factor. 
The closure of the orbit $\Gan\langle 1\rangle\subset \P(R)$ is a nondegenerate quadric if and only if there exists a nondegenerate quadratic form $q$ on $R$ such that the representation $\rho$ is orthogonal and the vector $1$ is isotropic with respect to this form.
\end{lemma}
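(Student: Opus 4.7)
The plan is to handle the two implications separately after a short preliminary step that pins down the dimension of the orbit. The crucial general facts I would use are that $\Gan$ is a connected unipotent algebraic group (hence admits no nontrivial algebraic characters to $\C^*$), and that in an $n$-factor the images of $S_1,\ldots,S_n$ are by definition linearly independent in $R$.

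\textbf{Preliminary step.} First I would verify that $\dim \overline{\Gan\langle 1\rangle} = n$, so that the orbit closure is genuinely a hypersurface in $\P(R) \cong \P^{n+1}$. The stabilizer of $\langle 1\rangle \in \P(R)$ inside $\Gan$ is the set of $\alpha = (\alpha_1,\ldots,\alpha_n)$ for which $\exp(\alpha_1 S_1 + \ldots + \alpha_n S_n)\cdot 1 \in \C\cdot 1$. Expanding the exponential in $R$ and comparing with $1$ forces $\alpha_1 S_1 + \ldots + \alpha_n S_n = 0$ in $R$, and linear independence of the $S_i$ (the $n$-factor hypothesis) then gives $\alpha=0$. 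So the orbit is $n$-dimensional and irreducible, being the image of $\Gan$ under an algebraic morphism.

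\textbf{The $(\Leftarrow)$ direction.} Suppose such a nondegenerate $q$ exists. Orthogonality of $\rho$ together with $q(1)=0$ gives $q(\exp(\alpha_1 S_1+\ldots+\alpha_n S_n)\cdot 1) = q(1) = 0$ for every $\alpha \in \C^n$, so the whole orbit lies in $\{q=0\}\subset \P(R)$. The zero locus $\{q=0\}$ is irreducible (by nondegeneracy of $q$) and of dimension $n$, and it contains an $n$-dimensional irreducible subvariety, namely $\overline{\Gan\langle 1\rangle}$; hence the two coincide, and this is a nondegenerate quadric.

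\textbf{The $(\Rightarrow)$ direction.} Suppose the orbit closure is a nondegenerate quadric $Q \subset \P(R)$. Then $Q = \{q=0\}$ for some nondegenerate quadratic form $q$ on $R$, unique up to a scalar. The image of $\Gan$ in $\PGL(R)$ preserves $Q$, so each operator $\rho(\alpha)$ rescales $q$ by some scalar $\chi(\alpha) \in \C^*$. The resulting $\chi\colon \Gan \to \C^*$ is an algebraic character of a connected unipotent group, therefore trivial. Hence $\rho$ preserves $q$ on the nose, i.e. is orthogonal, and $[1] \in Q$ means $1$ is isotropic for $q$.

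\textbf{Main obstacle.} The substantive content is concentrated in the single line ``$\chi(\alpha)$ is trivial'' in the $(\Rightarrow)$ direction; everything else is dimension counting plus manipulation of the exponential. No delicate structural input beyond Lemma~\ref{aut} and the triviality of characters of $\Gan$ is needed, so the real care is in getting the orbit-dimension computation right and in ensuring that the unique quadric through the orbit closure does indeed produce an orthogonal (rather than merely conformal) representation.
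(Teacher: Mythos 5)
Your proof is correct and follows essentially the same route as the paper: the sufficiency direction is the identical dimension-plus-irreducibility comparison of the orbit closure with $\{q=0\}$, and in the necessity direction your observation that the rescaling character $\chi\colon\Gan\to\C^*$ must be trivial is exactly the content of the reference to Popov--Vinberg (a polynomial whose zero locus is $G$-invariant is a semiinvariant, hence an invariant for unipotent $G$) that the paper cites at that point. The only difference is that you prove this small fact directly rather than quoting it, and you make the triviality of the stabilizer of $\langle 1\rangle$ explicit, which the paper leaves to the reader.
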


\begin{proof} 
{\it Necessity.} If a projective quadric coincides with the orbit-closure, then it is $\Gan$-invariant. So the cone over this quadric is also $\Gan$-invariant. This cone  is given by the same equation $q(u)=0$ in $R$. From \cite[Th. 3.1]{t55} it is known that the zero locus of a polynomial $f$ is $G$-invariant if and only if $f$ is $G$-semiinvariant. In our case the group $\Gan$ is unipotent, so the quadratic form $q(u)$ has to be $\Gan$-invariant. It means exactly the ortogonality of the representation. Moreover, the vector 1 is in the zero locus of this polynomial i.e. it is an isotropic vector.  

{\it Sufficience.} Suppose we have an orthogonal representation and an isotropic vector 1. Then the zero locus of the polynomial $q(u)$ is invariant and it contains 1. Hence the zero locus of the same quadratic form in projective space is also invariant. The representation is faithful, so the orbit of the point $1$ and the group $\Gan$  are of the same dimension. Therefore, the orbit of the point  $\langle 1\rangle$ is $n$-dimensional, since our group is unipotent. So the orbit-closure and the projectivisation of the quadric $q(u)=0$ are of the same dimension and both irreducible, hence they coincide.
\end{proof}

We take the images of the generators $S_i$ of the algebra $\C[S_1,\ldots, S_n]$ as basic vectors $\mu_i$, $i=2, \ldots, n+1$. It is always possible to change the set $\{S_i\}$ so that the corresponding basis in $R$ would be proper.

\begin{lemma}\label{ortmu}
The representation $\rho$ is orthogonal if and only if any operator of multiplication by $\mu_i$ in $R$ annihilates the bilinear form $B$ corresponding to the quadratic form $q$, i. e. $B(\mu_ix, y)+B(x, \mu_iy)=0$ as  $i=2, \ldots, n+1$ , $x, y\in R$.
\end{lemma}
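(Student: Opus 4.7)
The plan is to translate the orthogonality condition on the group $\Gan$ into a skew-symmetry condition on its Lie algebra. Under the Hassett-Tschinkel construction recalled in Section~2, $\rho((\alpha_1,\ldots,\alpha_n))$ acts on $R$ as multiplication by $\exp(\alpha_1 S_1+\cdots+\alpha_n S_n)$, so the differential $d\rho(S_i)$ is precisely the operator of multiplication by $\mu_{i+1}$. The lemma will then reduce to the standard fact that $\exp$ sends operators skew with respect to $B$ to operators orthogonal with respect to $B$.

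First I would prove the ``only if'' direction. Assuming $B(\rho(g)x,\rho(g)y)=B(x,y)$ for all $g\in\Gan$ and all $x,y\in R$, I would restrict to the one-parameter subgroup $t\mapsto tS_i$ and differentiate the invariance identity at $t=0$. The Leibniz rule gives
$$B(d\rho(S_i)x,y)+B(x,d\rho(S_i)y)=0,$$
and, identifying $d\rho(S_i)$ with multiplication by $\mu_{i+1}$ in $R$, this is exactly the required skew-symmetry relation for the generators.

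For the converse, I would argue that the skew-symmetry of the individual generators extends by $\C$-linearity: for every $\alpha=(\alpha_1,\ldots,\alpha_n)\in\C^n$, the operator $A_\alpha:=\alpha_1\mu_2+\cdots+\alpha_n\mu_{n+1}$ on $R$ is skew with respect to $B$. Since $\rho(\alpha)$ acts on $R$ as $\exp(A_\alpha)$, it suffices to check that the exponential of such an $A_\alpha$ preserves $B$. This follows from the computation
$$\frac{d}{dt}B(\exp(tA_\alpha)x,\exp(tA_\alpha)y)=B(A_\alpha u(t),v(t))+B(u(t),A_\alpha v(t))=0,$$
where $u(t)=\exp(tA_\alpha)x$ and $v(t)=\exp(tA_\alpha)y$; the value at $t=1$ then equals the value at $t=0$, namely $B(x,y)$.

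There is no serious obstacle: the proof is the classical correspondence between one-parameter subgroups of the orthogonal group and skew endomorphisms of the ambient space. The only point worth checking carefully is the identification of $d\rho(S_i)$ with multiplication by $\mu_{i+1}$, but this is already built into the way $\rho$ is recovered from the algebra $R$ in Section~2.
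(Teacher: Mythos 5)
Your proof is correct and takes essentially the same route as the paper: the paper's proof is a one-sentence appeal to the fact that the multiplication operators by $\mu_i$ are the images of the standard basis of $\Lie\Gan$ under $d\rho$, so orthogonality of $\rho$ is equivalent to skew-symmetry of these operators with respect to $B$. You have simply written out the differentiation and exponentiation steps that the paper leaves implicit.
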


\begin{proof}
The representation $\rho$ is orthogonal if and only if all operators of multiplication by $\mu_i$ are antisymmetric with respect to the form $B$, since these operators are images of the elements $(0,\ldots,0,1,0,\ldots, 0)\in\Lie\Gan$ 	under the differential of the representation $\rho$.
\end{proof}
\end{section}

\begin{section}{The main theorem}
Let $R$ be an $n$-factor of dimension $n+2$ with a proper basis of $\{\mu_i\}$. If $\dim \mathfrak m_R^2=1$ and $\mathfrak m_R^2=\langle \mu_{n+2}\rangle$, then one can construct the following bilinear map $B_0$ in the space $R$: 
\newline $B_0(\mu_1,\mu_i)=-\delta_{i,n+2}$; 
\newline $B_0(\mu_i,\mu_{n+2})=-\delta_{i1}$; 
\newline$B_0(\mu_i,\mu_j)=a_{ij}$, where $\mu_i\mu_j=a_{ij}\mu_{n+2}$ as $i,j=2,\ldots, n+1$ and $\delta_{ij}$ is the Kronecker symbol.  

\begin{theorem}\label{usl}
The orbit-closure of the line $\langle v\rangle\in \P^{n+1}$ spanned by the cyclic vector $v$ is a nondegenerate quadric if and only if the corresponding algebra $R$ is such that $\dim \mathfrak m_R^2=1$ and the form $B_0$ is nondegenerate. In this case the orbit-closure is given by the equation $B_0(u,u)=0$.
\end{theorem}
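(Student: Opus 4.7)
The plan is to translate the quadric condition into an algebraic condition via Lemmas~\ref{ort} and~\ref{ortmu}: the orbit-closure of $\langle 1\rangle$ is a nondegenerate quadric if and only if $R$ admits a nondegenerate symmetric bilinear form $B$ with $B(1,1)=0$ and satisfying the Leibniz-like identity $B(\mu_i x, y) + B(x, \mu_i y) = 0$ for $i = 2, \ldots, n+1$ and $x, y \in R$; in that case the quadric has equation $B(u,u)=0$. Both directions of the theorem then reduce to comparing such a form $B$ with $B_0$ on the proper basis.

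For sufficiency, assume $\dim\mathfrak{m}_R^2 = 1$ with $\mathfrak{m}_R^2 = \langle\mu_{n+2}\rangle$ and $B_0$ nondegenerate. Nakayama gives $\mathfrak{m}_R^3 = 0$, so the multiplication relations on the proper basis are $\mu_i\mu_{n+2} = 0$ for $i \geq 2$, $\mu_{n+2}^2 = 0$, and $\mu_i\mu_j = a_{ij}\mu_{n+2}$ for $i,j \leq n+1$. A short case-by-case check on basis pairs confirms the Leibniz identity for $B_0$; for instance, $(x, y) = (1, \mu_j)$ with $j \leq n+1$ gives $B_0(\mu_i, \mu_j) + B_0(1, \mu_i\mu_j) = a_{ij} - a_{ij} = 0$. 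Lemma~\ref{ort} then yields a nondegenerate quadric cut out by $B_0(u,u) = 0$.

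For necessity, take $B$ from the translation above. Iterating the Leibniz identity with $y = 1$ and using symmetry of $B$ yields $B(1, \mu_i) = 0$ for $i \leq n+1$ and $B(1, m) = 0$ for every $m \in \mathfrak{m}_R^3$. The crucial kernel claim is this: if $\nu = \sum_{i=2}^{n+1} c_i \mu_i$ lies in $\mathfrak{m}_R^2$, then $B(\nu, 1) = 0$ by linearity, and for every basis vector $\mu_k$ one has $\nu\mu_k \in \mathfrak{m}_R^3$, so the Leibniz identity gives $B(\nu, \mu_k) = -B(1, \nu\mu_k) = 0$; nondegeneracy of $B$ then forces $\nu = 0$. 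Hence $\mu_2, \ldots, \mu_{n+1}$ are linearly independent modulo $\mathfrak{m}_R^2$, and combined with $\dim\mathfrak{m}_R = n+1$ this gives $\dim\mathfrak{m}_R^2 = 1$; the proper-basis condition then pins down $\mathfrak{m}_R^2 = \langle\mu_{n+2}\rangle$. Reconstructing $B$ on the basis via the Leibniz identity now produces $B = -c\cdot B_0$ with $c = B(1, \mu_{n+2}) \neq 0$, so $B_0$ is nondegenerate and the quadric has equation $B_0(u,u) = 0$.

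The main obstacle is the kernel argument forcing $\dim\mathfrak{m}_R^2 = 1$; it crucially uses the inclusion $\mathfrak{m}_R^2 \cdot \mathfrak{m}_R \subseteq \mathfrak{m}_R^3$ together with the vanishing of $B(1,\cdot)$ on $\mathfrak{m}_R^3$.
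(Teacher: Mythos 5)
Your reduction to the existence of a nondegenerate invariant symmetric form $B$ with $B(1,1)=0$, your sufficiency argument, and the final reconstruction $B=-c\,B_0$ all match the paper. The gap is in the necessity direction, in the claim that $B(1,m)=0$ for every $m\in\mathfrak m_R^3$. Iterating $B(\mu_i x,y)=-B(x,\mu_i y)$ and then invoking symmetry gives $B(1,w)=(-1)^lB(1,w)$ for $w$ a product of $l$ generators, which forces $B(1,w)=0$ only when $l$ is odd. But $\mathfrak m_R^3$ is spanned by products of three \emph{or more} generators, and a priori it contains products of an even number of them whenever $\mathfrak m_R^4\ne 0$ (e.g. $R=\C[x]/(x^5)$ viewed as a $3$-factor); for those your argument yields nothing, and you cannot flip past $\mu_{n+2}$ either, since antisymmetry is only available for multiplication by linear combinations of the generators $\mu_2,\dots,\mu_{n+1}$, and $\mu_{n+2}$ need not be such a combination. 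Since your kernel argument, and hence the conclusion $\dim\mathfrak m_R^2=1$, rests entirely on this vanishing, necessity is not established as written.

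The paper closes exactly this hole with a step you are missing: it first proves that $\mu_{n+2}$ lies in the maximal nonzero power $z(R)$ of $\mathfrak m_R$, by observing that otherwise a nonzero element $\mu=\sum_{i=2}^{n+1}\alpha_i\mu_i$ of $z(R)$ would satisfy $B(\mu,\mu_k)=-B(1,\mu\mu_k)=-B(1,0)=0$ (here the multiplier is $\mu$ itself, a combination of generators, and $z(R)\mathfrak m_R=0$) and $B(\mu,1)=0$, contradicting nondegeneracy. Once $\mu_{n+2}\cdot\mathfrak m_R=0$ is known, every product $\mu_j\mu_k$ with $j,k\in\{2,\dots,n+2\}$ is either zero or handled by the odd-length flip, so $\mathfrak m_R^2$ --- which is spanned by such two-fold products of basis vectors of $\mathfrak m_R$, not by generator monomials of unbounded length --- is orthogonal to all of $\mathfrak m_R$; nondegeneracy then gives $\mathfrak m_R^2\subseteq\mathfrak m_R^{\perp}=\langle\mu_{n+2}\rangle$ directly, with no need to control $B(1,\cdot)$ on $\mathfrak m_R^3$. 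Add this socle step (or an equivalent control of products involving $\mu_{n+2}$) before running your kernel argument.
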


\begin{proof}
Assign a finite-dimensional algebra $R$ to the pair $(\rho,v)$. The vector $v$ becomes a unity in this algebra.  We are to find out when it is possible to equip the algebra $R$ with a nondegenerate $\Gan$-invariant bilinear form $B$ such that the vector 1 is isotropic. Suppose that there is such a form and let us study its properties.

\begin{lemma}\label{svoj} Suppose $R$ is an $n$-factor equipped with a proper basis and a nondegenerate invariant bilinear form $B$ such that the vector 1 is isotropic. Then the following conditions are fulfilled:
\begin{itemize}
\item [1.] $B(1,\mu_i)=0$ as $i=2,\ldots, n+1$;
\item [2.] $B(\mu_i,\mu_j)=-B(1,\mu_i\mu_j)$ as $i,j=2,\ldots,n+1$;
\item [3.] $B(\mu_i,\mu_{n+2})=0$ as $i=2,\ldots, n+2$;
\item [4.] $B(x,y)= B_0(x,y)$, if the form is normalized such that $B(1,\mu_{n+2})=-1$;
\item [5.] $\dim \mathfrak m_R^2$=1;
\end{itemize}

\end{lemma}

\begin{proof}
1) We know from Lemma 5 that the operator of multiplication by $\mu_i$ ($i=2,\ldots,n+1$) annihilates any $\Gan$-invariant bilinear form  $B$. So for $i=2,\ldots, n+1$ one can see that
$$B(1, \mu_i)+B(\mu_i,1)=0.\eqno(1)$$

2) The next condition
$$B(\mu_i,\mu_j)=-B(1,\mu_i\mu_j)\eqno(2)$$
can be proved in a similar way for all $i,j=2,\ldots, n+1$.

From (1) it follows that the vector $\mu_1$ is orthogonal to each of the vectors $\mu_2,\ldots,\mu_{n+1}$ and to itself also (since the vector 1 is isotropic). The form $B$ is nondegenerate, so  $B(1,\mu_{n+2})\ne 0.$  Hence one can put $B(1,\mu_{n+2})=-1.$ From (2) it can be easily deduced that $B(\mu_i,\mu_j)$ ($i,j=2,\ldots,n+1$) equals to the coefficient by $\mu_{n+2}$ in the decomposition of the product $\mu_i\mu_j$ in our basis. Therefore we need only to define $B(\mu_{n+2},\mu_i)$ whereas $i=2,\ldots,n+2$. 

Let us show that $\mu_{n+2}$ is in the  socle $z(R)$ of the ideal $\mathfrak m_R$, which is the maximal nonzero power of this ideal. Indeed, suppose the opposite; then take any element $\mu=\alpha_2\mu_2+\ldots+\alpha_{n+1}\mu_{n+1}\in z(R)$ (there is no $\alpha_{n+2}$, since the basis is proper). In this case for  $i=2, \ldots, n+2$ we have $B(\mu_i,\mu)=-B(\mu\mu_i,1)=0$, since the product $\mu\mu_i$ should belong to the ideal $z(R)\mathfrak m_R=0$. Moreover, one can see that  $B(1,\mu)=\sum_{i=2}^{n+1}\alpha_i B(1,\mu_i)=0$.  Hence the element  $\mu$ is in the kernel of the form and so it is equal to zero. The contradiction proves our statement. 

5) Note that
$$B(\mu_i,\mu_j\mu_k)=-B(\mu_i\mu_j,\mu_k)=B(\mu_j,\mu_i\mu_k)=-B(\mu_j\mu_k,\mu_i),$$
hence for any любых $i,j, k=2,\ldots,n+1$ we have
$$B(\mu_i,\mu_j\mu_k)=0.\eqno (3)$$ 
It is also true if $i$, $j$ or $k$ is equal to $n+2$, since the element $\mu_{n+2}$ is in the socle of the ideal.  
Therefore, any element of the ideal $\mathfrak m_R^2$ is orthogonal to $\langle \mu_2,\ldots,\mu_{n+2}\rangle$, so this square should lie in $\langle \mu_{n+2}\rangle$. That means that the dimension of this square cannot exceed 1. If it is zero then $R$ is not an $n$-factor. It proves the statement 5.

Now we can finish with the statements 3 and 4 by defining of the form $B$ at last. One has $B(\mu_i,\mu_{n+2})=-B(1,\mu_i\mu_{n+2})=-B(1,0)=0$, since the element $\mu_{n+2}$ is in the socle. It is also easy to show that  $B(\mu_{n+2},\mu_{n+2})=0$  by representing the element $\mu$ as a polynomial of other $\mu_i$'s. So the vector $\mu_{n+2}$ is orthogonal to the whole ideal $\mathfrak m_R$ and is not orthogonal to the vector 1. The form $B$ is symmetric, hence $B(\mu_{n+2},1)=-1$. Our form is therefore uniquely defined up to multiplication by a scalar.
\end{proof}

Now half of Theorem \ref{usl} follows from Lemma~\ref{svoj}. Conversely, suppose that the ideal $\mathfrak m_R^2$ is one-dimensional and the corresponding form is nondegenerate. By definition of the form $B_0$ we have $B_0(\mu_i\mu_j,\mu_k)=0$ for all $i,j,k=2,\ldots,n+2$, so this form is invariant with respect to the action of the group. Moreover, in this case the algebra $R$ is an $n$-factor, and it follows from Lemmas in Section 3 that the orbit-closure of the line spanned by a cyclic vector (or, equivalently, the line $\langle 1\rangle$ in the corresponding finite-dimensional algebra) is the quadric $B_0(u,u)=0$. This concludes the proof.
\end{proof}

\begin{theorem}\label{unit} For each $n$ there exists a unique (up to isomorphism) pair $(\tilde\rho, v)$, where $\tilde\rho$ is a projective representation of the group $\Gan$ and $v$ is a cyclic vector in the corresponding space, such that the orbit-closure of the line $\langle v \rangle$ is a nondegenerate quadric in $\P^{n+1}$.
\end{theorem}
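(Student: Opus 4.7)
The plan is to translate the claim into a classification of local finite-dimensional algebras via Theorem~\ref{usl}, encode each candidate algebra by a nondegenerate symmetric bilinear form on an $n$-dimensional complex vector space, and then invoke the triviality of the classification of such forms over $\C$.

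By Theorem~\ref{HTosn} together with the remark (following Theorem~\ref{HTloc}) that every projective representation lifts uniquely to a rational linear one, isomorphism classes of pairs $(\tilde\rho, v)$ with $\tilde\rho$ a projective representation of $\Gan$ in $\P^{n+1}$ and $v$ a cyclic vector are in bijection with isomorphism classes of $n$-factors $R$ of $\C$-dimension $n+2$. By Theorem~\ref{usl}, the orbit-closure of $\langle v\rangle$ is a nondegenerate quadric precisely when $\dim \mathfrak m_R^2 = 1$ and the associated form $B_0$ is nondegenerate. It therefore suffices to show that, up to isomorphism, there is exactly one such algebra.

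Let $R$ be any such algebra with proper basis $\{1, \mu_2, \ldots, \mu_{n+1}, \mu_{n+2}\}$ and $\mathfrak m_R^2 = \langle \mu_{n+2}\rangle$. First I would note that $\mathfrak m_R^3 = 0$: the alternative $\mathfrak m_R^3 = \mathfrak m_R^2$ (forced by $\dim \mathfrak m_R^2 = 1$) would give $\mathfrak m_R \cdot \mathfrak m_R^2 = \mathfrak m_R^2$, contradicting Nakayama since $\mathfrak m_R^2$ is a nonzero finitely generated module. Consequently $\mu_i \mu_{n+2} = 0$ for all $i$ and the whole multiplication is encoded by the symmetric matrix $A = (a_{ij})_{2 \leq i,j \leq n+1}$ defined by $\mu_i \mu_j = a_{ij}\mu_{n+2}$. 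The Gram matrix of $B_0$ restricted to $\langle \mu_2, \ldots, \mu_{n+1}\rangle$ is exactly $A$, so nondegeneracy of $B_0$ is equivalent to $\det A \ne 0$.

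Next I would analyse the isomorphism relation. Any $\C$-algebra isomorphism $\phi: R \to R'$ carries $\mathfrak m_R^2$ onto $\mathfrak m_{R'}^2$, hence $\phi(\mu_{n+2}) = \lambda\mu_{n+2}'$ for some $\lambda \in \C^*$, while the map induced on a complement of $\mathfrak m_R^2$ in $\mathfrak m_R$ is given by some $C = (c_{ij}) \in \GL_n(\C)$, namely $\phi(\mu_i) \equiv \sum_j c_{ij}\mu_j' \pmod{\mathfrak m_{R'}^2}$. A direct computation of $\phi(\mu_i)\phi(\mu_j)$, in which the $\mathfrak m_{R'}^2$-correction terms are killed because $\mathfrak m_{R'}^3 = 0$, yields
$$\lambda A = C A' C^{T},$$
and conversely every pair $(C, \lambda)$ produces such an isomorphism. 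Over $\C$, any two nondegenerate symmetric matrices are congruent; therefore for any admissible $A, A'$ one can find $C$ (with $\lambda = 1$) such that $A = C A' C^T$, proving uniqueness. For existence I would exhibit
$$R_0 = \C[S_1, \ldots, S_n] \big/ \bigl(S_iS_j \ (i \ne j),\ S_i^2 - S_1^2 \ (i \geq 2),\ S_1^3\bigr),$$
which is an $n$-factor of dimension $n+2$ with basis $\{1, S_1, \ldots, S_n, S_1^2\}$, satisfies $\mathfrak m^2 = \langle S_1^2\rangle$, and has classifying matrix $A = I_n$.

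The main obstacle is the isomorphism-to-matrix computation: verifying that arbitrary $\C$-algebra isomorphisms between such algebras correspond exactly to the action $A \mapsto \lambda^{-1} C A C^T$ on the classifying symmetric matrix. Once this is in place, both existence and uniqueness reduce to the elementary fact that every nondegenerate symmetric bilinear form over $\C$ is congruent to the standard one.
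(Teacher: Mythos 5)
Your proposal is correct and follows essentially the same route as the paper: reduce via Theorem~\ref{usl} to algebras with $\dim\mathfrak m_R^2=1$ and nondegenerate $B_0$, observe that such an algebra is equivalent to a nondegenerate symmetric bilinear form on $\mathfrak m_R/\mathfrak m_R^2$ (the matrix $A$), and conclude by the fact that all nondegenerate symmetric forms over $\C$ are congruent. You are merely more explicit than the paper about the details it compresses into ``the form and the multiplication determine each other'' --- namely the Nakayama argument for $\mathfrak m_R^3=0$, the congruence action $\lambda A = CA'C^{T}$ on classifying matrices, and the explicit existence model --- which is a welcome elaboration rather than a different method.
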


\begin{proof}
From the previous theorem it follows that the quadric $Q_n$ is an orbit-closure if and only if in the algebra $R$ the square of the maximal ideal is one-dimensional and the form $B$ is nondegenerate. In this case the matrix of the form $B$ is the following:
$$\left(\begin{array}{ccccc}0&0&\ldots&0&-1\\\vdots&&A&&\vdots\\-1&0&\ldots&0&0\end{array}\right).$$
One can change the basis in $\langle\mu_2,\ldots, \mu_{n+1}\rangle $ to replace this matrix by the matrix
$$\left(\begin{array}{ccccc}0&0&\ldots&0&-1\\\vdots&&E_n&&\vdots\\-1&0&\ldots&0&0\end{array}\right),$$
where $E_n$ is the identity matrix  (it is possible because the restriction of the form $B$ on the subspace $\langle \mu_2,\ldots, \mu_{n+1}\rangle$ is nondegenerate).
We need also to note that if the square of the maximal ideal is one-dimensional, then the form $R$ and the multiplication in the algebra $R$ determine each other. So any algebra which corresponds to the action of this type has to be isomorphic to the algebra $R=\C[S_1,\ldots, S_n]/(S_k^2-S_l^2, S_iS_j\mid k,l=1,\ldots, n, i\ne j)$.
\end{proof}

{\bf Remark 1.} Consider any local algebra of dimension $n+2$ such that the square of its maximal ideal is one-dimensional. Then we can assign to it a quadric as an orbit-closure of the line $\langle 1\rangle$. However this quadric can be degenerate. Moreover, a quadric can be an orbit-closure for actions of another type. So in general a quadric does not determine an algebra uniquely. As an example one can take a quadric $y_1^2-2y_0y_2=0$ in $\P^3$ which corresponds both to algebra $A_1=\C[x,y]/(x^3-y, x^4)$ and to $A_2=\C[x,y]/(y^2, xy, x^3)$.  The corresponding actions can be represented by the matrices
$$\rho_1(a)=\left(\begin{array}{cccc}1&0&0&0\\
a_1&1&0&0\\\frac{a_1^2}{2}&a_1&1&0\\a_2+\frac{a_1^3}{6}&\frac{a_1^2}{2}&a_1&1\end{array}\right)$$
and
$$\rho_2(a)=\left(\begin{array}{cccc}1&0&0&0\\
a_1&1&0&0\\\frac{a_1^2}{2}&a_1&1&0\\a_2&0&0&1\end{array}\right),$$
where $a=(a_1,a_2)\in\mathbb G_a^2$. In the former case the orbit-closure of the open orbit contains three orbits while in the latter case there is an infinite amount of orbits in the orbit-closure.

\end{section}

\begin{section}{Modality of locally transitive actions}

This section is devoted to more detailed study of locally transitive actions of the group $\Gan$. In particular, we will discuss when the orbit-closure of the line spanned by a cyclic vector contains a finite amount of orbits.

The paper \cite{H-T} contains the following proposition:
\begin{prop}\label{finite}\cite[Prop. 3.7]{H-T}
For each $n$ there exists a unique locally transitive action of the group $\Gan$ on the space $\P^n$ with a finite number of orbits. This action corresponds to the local algebra $R=\C[S_1,\ldots, S_n]/(S_1^i-S_i, S_jS_k\mid i=1,\ldots,n, j+k>n)$.
\end{prop}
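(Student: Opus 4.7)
The plan is to combine the Hassett-Tschinkel correspondence (Theorem~\ref{HTloc}) with a direct analysis of the $\Gan$-orbits on a local algebra $R$ of $\C$-dimension $n+1$. Under that correspondence $\Gan$ acts on $R$ by left multiplication by $\exp(\alpha_1S_1+\ldots+\alpha_nS_n)\in 1+\mathfrak m_R$; in the locally transitive case the images of $S_1,\ldots,S_n$ in $R$ form a $\C$-basis of $\mathfrak m_R$ (noted in the paragraph preceding Theorem~\ref{HTloc}), so the image of $\Gan$ in $\GL(R)$ exhausts the whole unipotent subgroup $1+\mathfrak m_R$. Accordingly, the $\Gan$-orbit of a point $[r]\in\P(R)=\P^n$ coincides with $\{[cr+mr]:c\in\C^*,\,m\in\mathfrak m_R\}$.

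The key invariant I would exploit is the leading term. For nonzero $r\in R$ set $k(r)=\max\{k:r\in\mathfrak m_R^k\}$; the class $[\bar r]\in\P(\mathfrak m_R^{k(r)}/\mathfrak m_R^{k(r)+1})$ is preserved by the action, because $(1+m)r=r+mr$ with $mr\in\mathfrak m_R^{k(r)+1}$, and it is also invariant under rescaling. Hence the set of orbits surjects onto $\bigsqcup_{k=0}^{n}\P(\mathfrak m_R^k/\mathfrak m_R^{k+1})$, so finiteness of the orbit count forces $\dim_\C(\mathfrak m_R^k/\mathfrak m_R^{k+1})\le 1$ for every $k$. Since these dimensions sum to $n$, each one must equal $1$ for $k=1,\ldots,n$; by Nakayama's lemma $\mathfrak m_R$ is then principal, $\mathfrak m_R=(t)$, and so $R\cong\C[t]/(t^{n+1})$. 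Setting $S_i:=t^i$ identifies this with the algebra displayed in the statement, proving uniqueness up to isomorphism.

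For existence and an explicit orbit count in $R=\C[t]/(t^{n+1})$, note that for $r=t^ku$ with $u\in R^\times$ one has $\mathfrak m_R\cdot r=t^{k+1}R=\mathfrak m_R^{k+1}$, so the orbit of $[r]$ is the affine cell $\{[ct^k+s]:c\in\C^*,\,s\in\mathfrak m_R^{k+1}\}$ of dimension $n-k$; these cells for $k=0,1,\ldots,n$ stratify $\P^n$ into exactly $n+1$ orbits.

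The main obstacle I anticipate is justifying the implication ``finite number of orbits $\Rightarrow\dim(\mathfrak m_R^k/\mathfrak m_R^{k+1})\le 1$'': one must check that two elements with inequivalent projective leading terms lie in different orbits, which follows from the invariance of $[\bar r]$ under the joint action of $\C^*$ and $1+\mathfrak m_R$, but it is the pivotal step that converts a finiteness hypothesis on orbits into a structural constraint on $R$.
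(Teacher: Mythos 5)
The paper does not actually prove this proposition: it is quoted verbatim from Hassett--Tschinkel \cite[Prop.~3.7]{H-T} as an imported result, so there is no internal proof to compare against. Your argument is correct and self-contained, and it is essentially the natural proof given the machinery already recalled in Section~2: since the images of $S_1,\ldots,S_n$ span $\mathfrak m_R$ in the locally transitive case and $\exp$ is a bijection $\mathfrak m_R\to 1+\mathfrak m_R$, the image of $\Gan$ is exactly the principal unit group $1+\mathfrak m_R$, so orbits in $\P(R)$ are the sets $\{[ur]:u\in R^\times\}$; the leading-term class in $\P\bigl(\mathfrak m_R^{k(r)}/\mathfrak m_R^{k(r)+1}\bigr)$ is then a well-defined orbit invariant, and finiteness of the orbit set forces every graded piece of $\mathfrak m_R$ to be at most one-dimensional, whence $R\cong\C[t]/(t^{n+1})$, which is the algebra in the statement after the substitution $S_i=t^i$. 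The converse orbit count ($n+1$ affine cells of dimensions $n,n-1,\ldots,0$) is also right. Two small points worth tightening if you write this up: the disjoint union should formally run over those $k$ with $\mathfrak m_R^k\neq\mathfrak m_R^{k+1}$ (the empty projective spaces cause no harm, but saying so avoids confusion before you have established that all graded pieces are nonzero up to degree $n$), and you should note explicitly that Theorem~\ref{HTloc} is what converts the uniqueness of the algebra into uniqueness of the action up to equivalence. Neither affects correctness.
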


Suppose that an algebraic group $H$ acts on an irreducible variety $X$. By $d(X, H)$ we denote the minimal codimension of an $H$-orbit in $X$.  The {\it modality} $\mod(Y, H)$ of an action $H:Y$ equals to maximum of the value $d(X,H)$ over irreducible invariant subvarieties  $X\subset Y$. In particular,  $\mod(Y,H)=0$ if and only if the number of $H$-orbits in $Y$ is finite.

Let us pose a natural question to generalize Proposition \ref{finite}: what is the possible modality of a locally transitive action of the group $\Gan$ on $\P^n$? 

\begin{example} Suppose that an algebra $R$ is of the form $\C[x,y]/(x,y)^{k+1}$. We shall prove that in this case the modality of the corresponding action on the projective space equals to $pk-p^2+p$, where $p=[\frac{k+1}{2}]$.
\end{example}

Begin with a special case $k=3.$ We need the matrix $\rho(a)$ ($a=(a_1,\ldots,a_9)\in\mathbb G_a^9$) of the corresponding representation. So let us calculate an exponent of an element $a_1\mu_1+\ldots+a_9\mu_9=a_1x+a_2y+a_3x^2+\ldots+a_9y^3$ and  multiply it by different  $\mu_i$. The following matrix is obtained:
$$\left(\begin{array}{cccccccccc}1&0&0&0&0&0&0&0&0&0\\
a_1&1&0&0&0&0&0&0&0&0\\
a_2&0&1&0&0&0&0&0&0&0\\
a_3+a_1^2/2&a_1&0&1&0&0&0&0&0&0\\
a_4+a_1a_2&a_2&a_1&0&1&0&0&0&0&0\\
a_5+a_2^2/2&0&a_2&0&0&1&0&0&0&0\\
a_6+a_1a_3+a_1^3/6&a_3+a_1^2/2&0&a_1&0&0&1&0&0&0\\
a_7+a_1a_4+a_2a_3+a_1^2a_2/2&a_4+a_1a_2&a_3+a_1^2/2&a_2&a_1&0&0&1&0&0\\
a_8+a_1a_5+a_2a_4+a_1a_2^2/2&a_5+a_2^2/2&a_4+a_1a_2&0&a_2&a_1&0&0&1&0\\
a_9+a_2a_5+a_2^3/6&0&a_5+a_2^2/2&0&0&a_2&0&0&0&1\end{array}\right).$$
The boundary of the orbit $\Gan\langle 1\rangle$ consists of all vectors with $x_0=0$ i.e. of  vectors $x\mu=x_1\mu_1+\ldots+x_9\mu_9$.
Let us calculate the product of the matrix and the element $x\mu$:
$$\rho(a)(x\mu)=x_1\mu_1+x_2\mu_2+(a_1x_1+x_3)\mu_3+(a_2x_1+a_1x_2+x_4)\mu_4+(a_2x_2+x_5)\mu_5+$$
$$+((a_3+a_1^2/2)x_1+a_1x_3+x_6)\mu_6+((a_4+a_1a_2)x_1+(a_3+a_1^2/2)x_2+a_2x_3+a_1x_4+x_7)\mu_7+$$
$$+((a_5+a_2^2/2)x_1+(a_4+a_1a_2)x_2+a_2x_4+a_1x_5+x_8)\mu_8+((a_5+a_1^2/2)x_2+a_2x_5+x_9)\mu_9.$$
Let us consider  elements that satisfy the conditions of the type $x_1=0,\ldots, x_c=0, x_{c+1}\ne 0$. For each $c$ we calculate the dimension of any orbit of this type and the dimension of the family of such orbits. If we take an element $x\mu$ with $x_1\ne 0$, then two first coordinates are fixed under our action; the third coordinate can take on any value (and this value determines $a_1$); the fourth also takes on any value and determines $a_2$; the fifth coordinate is fixed again, since $a_2$ is already known; the sixth coordinate detemines $a_3$, the seventh coordinate determines  $a_4$, the eigth coordinate determines $a_5$, and the ninth is fixed. Therefore, there are five free parameters $a_i$, so the orbit dimension equals to 5. The dimension of the family of all orbits with $x_1\ne 0$ is 1 less than the difference of an amount of free $x_i$ and of free $a_i$ which equals to $9-5-1=3$. Hence the orbits with $x_1\ne 0$ form a three-dimensional space of five-dimensional orbits.

Now  proceed to other $c$'s. One can note that for any $c$ the set of elements that satisfy our condition is $\Gan$-invariant. Hence if we take an element $x\mu$ with $x_1=0,\ldots, x_c=0, x_{c+1}\ne 0$, then any element from its orbit $\Gan x$ satisfies the conditions of the same type for some $c'\ge c.$ The similar argument lets us complete the following table:
$$\begin{tabular}{c|c|c}
\hline
$c$& orbit dimension & family dimension\\
\hline
0 & 5& 3\\
\hline
1&5& 2\\
\hline
2&2&4\\
\hline
3&2&3\\
\hline
4&2&3\\
\hline
5&1&3\\
\hline
6&0&2\\
\hline
7&0&1\\
\hline
8&0&0\\
\hline
\end{tabular}$$
It is obvious that the modality of this action equals to the maximum of family dimensions, which is 4.

Now we generalize this idea for any $k$. Let us introduce the renumeration of the basic elements by distributing them into subsets of $l_0=1$ element of zero power, $l_1=2$ elements of power 1, and so on:  $1=\mu_0=\mu_{l_0}, \mu_1=\mu_{l_1}, \mu_2=\mu_{l_1+1}, \mu_1^2=\mu_3=\mu_{l_2}, \ldots, \mu_1^b=\mu_{l_b}, \mu_1^{b-1}\mu_2=\mu_{l_b+1},\ldots, \mu_2^{b}=\mu_{l_b+b},\ldots$. 
  
The matrix $\rho(a)$ is constructed of $(i+1)\times(j+1)$-blocks $A_{ij}$. Blocks above the main diagonal are zeros, while diagonal blocks $A_{ii}$ are the identity matrices  $E_{i+1}$, a block $A_{i0}$ is a column with elements $y_{l_i},\ldots y_{l_i+i}$, and a block $A_{ij}$ with $i>j>0$ is of the form
$$A_{ij}=\left(\begin{array}{cccc}y_{l_{i-j}}&0&\ldots&0\\y_{l_{i-j}+1}&y_{l_{i-j}}&\ldots&0\\
\vdots&\vdots&\vdots&\vdots\\y_{l_{i-j}+i-j}&0&\ldots&\\0&y_{l_{i-j}+i-j}&\ldots&\vdots\\\vdots&\vdots&\vdots&\vdots\\
0&0&\ldots&y_{l_{i-j}+i-j}\end{array}\right).$$
Here by $y_p$ we denote some polynomials of variables $a_q$. It is useful to note that the polynomial $y_p$ contains the summand $a_p$ and polynomials with smaller indices do not depend on $a_p$.

It is easy to see that the modality of our action equals to the maximal (over all $c$) amount of parameters in the set of orbits of elements with $x_1=\ldots=x_c=0$, $x_{c+1}\ne 0$. One can obtain the orbit dimension of such element by killing first $c$ columns of the matrix and consider the rows of the resulting submatrix. It is easily  deduced from the desription of the matrix that the orbit dimension equals to the amount of different $y_i$'s in the corresponding submatrix. Hence the family dimension is 1 less than the difference between the dimension of the union of all orbits (which is the the amount of columns in the submatrix) and the amount of $y_i$'s. If we state the power of the monomial corresponding to the first column of the remainder matrix, then killing of one more column decreases the minuend and does not change the subtrahend, so it is sufficient to restrain ourselves to submatrices with first column numbered $l_i$. Suppose the first column is numbered $l_p$.  Then there are exactly $2+3+\ldots+(k-p+1)$ different $y_i$, so the amount of parameters can be calculated as $(p+1)+(p+2)+\ldots+(k+1)-2-3-\ldots(k-p+1)-1 =pk-p^2+p$. This term attains its minimum whereas  $p=[\frac{k+1}{2}]$. 

Now proceed to an action of the group $\Gan$ on a quadric $Q_n\subset\P^{n+1}$. We have already obtained the corresponding algebra  $R=\C[S_1,\ldots, S_n]/(S_k^2-S_l^2, S_iS_j\mid k,l=1,\ldots, n, i\ne j)$ and the quadratic form $q=2y_0y_{n+1}-\sum_{i=1}^n y_i^2.$ Let us describe the orbit-closure of the line $\langle v_0\rangle$ spanned by the cyclic vector $v_0=1\in R$. One can write down the representation $\rho$ with the matrices:
$$\rho(a)=\left(\begin{array}{ccccc}1&0&\ldots&0&0\\a_1&1&\ldots&\vdots&\vdots\\\vdots&\vdots&\ddots&\vdots&\vdots\\a_n&0&\ldots&1&0\\\frac{1}{2}\sum a_i^2&a_1&\ldots&a_n&1\end{array}\right),$$
where $a=(a_1,\ldots, a_n)\in\Gan$. The orbit of the line $\langle 1\rangle$ consists of all points $(1:a_1:\ldots:a_n:\frac{1}{2}\sum a_i^2)$. This orbit is dense in the quadric $2y_0y_{n+1}-\sum_{i=1}^n y_i^2=0$ and its complement is the union of all points $(0:b_1:\ldots:b_n:c)$ with $\sum b_i^2=0$.

\begin{prop}\label{modqu}
The modality of a locally transitive action of the group $\Gan$ on the quadric $Q_n$ equals to $n-2$ if $n\ge 2$ and is zero if $n=1$.
\end{prop}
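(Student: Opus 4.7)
The plan is to reduce the computation of $\mod(Q_n,\Gan)$ to an explicit analysis of the action of $\Gan$ on the boundary $Z := Q_n \setminus \Gan\langle v_0\rangle$. From the formula for $\rho(a)$ displayed above, the set $Z$ consists of the points $[0:y_1:\ldots:y_{n+1}]$ of $Q_n$, i.e.\ those with $\sum_{i=1}^n y_i^2 = 0$. For $n=1$ this equation forces $y_1=0$, so $Z = \{[0:0:1]\}$ is a single fixed point and $\mod(Q_1,\Gan)=0$ at once.

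Assume $n\ge 2$. A direct multiplication by $\rho(a)$ shows that on $Z$ only the last coordinate moves: $y_{n+1}$ is replaced by $y_{n+1}+\sum_{i=1}^n a_iy_i$. Consequently the vertex $v_\infty := [0:\ldots:0:1]$ is fixed; and for every other boundary point we have $(y_1,\ldots,y_n)\ne 0$, so the linear functional $a\mapsto \sum_{i=1}^n a_iy_i$ is surjective onto $\C$, the orbit is one-dimensional, and its projective closure is the line through the point and $v_\infty$. Thus $Z$ is the projective cone with vertex $v_\infty$ over the base $\overline Q := \{\sum_{i=1}^n y_i^2=0\}\subset\P^{n-1}$, which is a nondegenerate $Q_{n-2}$, and the $\Gan$-orbits on $Z$ are exactly $\{v_\infty\}$ and the generators of this cone, parametrised by $\overline Q$.

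Given this geometric picture, one classifies the irreducible $\Gan$-invariant closed subvarieties $X\subset Q_n$: either $X=Q_n$, giving $d(X,\Gan)=0$ via the open orbit, or $X\subset Z$, in which case $X$ is $\{v_\infty\}$ or the projective cone $C(W)$ over some irreducible closed $W\subset \overline Q$. In the cone case $\dim C(W)=\dim W+1$ and every non-fixed orbit in $C(W)$ is one-dimensional, so $d(C(W),\Gan)=\dim W$. This is maximised by $W=\overline Q$ whenever $\overline Q$ is irreducible, that is, for $n\ge 3$, yielding $d(Z,\Gan)=n-2$; for $n=2$ the base $\overline Q\cong Q_0$ is a pair of points and the formula still gives $\mod(Q_2,\Gan)=0=n-2$. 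Assembling the cases proves $\mod(Q_n,\Gan)=n-2$ for $n\ge 2$.

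The main technical point is the classification of irreducible $\Gan$-invariant closed subvarieties of $Z$ as precisely the cones $C(W)$. This follows from the observation that $\Gan$ preserves each generator of the cone $Z$ setwise and acts transitively on its complement of $v_\infty$: any closed $\Gan$-invariant $X\subset Z$ is therefore a union of generators, and its image under the projection $Z\setminus\{v_\infty\}\to \overline Q$ is closed, with dimension one less than $\dim X$ whenever $X\ne\{v_\infty\}$.
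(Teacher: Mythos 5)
Your proof is correct and follows essentially the same route as the paper: both compute the boundary orbits explicitly from the matrix of $\rho(a)$, identify the boundary as the cone with vertex $(0:\ldots:0:1)$ over the quadric $\sum b_i^2=0$ in $\P^{n-1}$ whose one-dimensional orbits are parametrised by that $(n-2)$-dimensional base, and treat $n=1$ separately. Your classification of the irreducible invariant subvarieties as cones $C(W)$ only makes explicit the step the paper leaves implicit in passing from ``an $(n-2)$-dimensional family of orbits'' to the stated value of the modality.
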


\begin{proof}
If $n\ge 2$, then the orbit of a point $(0:b_1:\ldots:b_n:c)$ with $\sum b_i^2=0$ consists of all elements $(0:b_1:\ldots:b_n:c+b_1a_1+\ldots+b_na_n)$. It means that the orbit is determined by the set of the coordinates of its element except for the first and the last ones. These coordinates have to satisfy the equation  $\sum b_i^2=0$. Therefore these orbits form a $(n-2)$-dimensional family and fill all the supplement of the orbit $\Gan\langle 1\rangle $. If we have $n=1$, then the quadric contains only two orbits: the orbit of a cyclic vector and a fixed point $(0:0:1)$.
\end{proof}

\begin{cor} There is a finite number of orbits for a locally transitive action of a group $\Gan$ on $Q_n$ if and only if $n\le 2$.
\end{cor}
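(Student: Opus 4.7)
The plan is to deduce this corollary as an essentially immediate consequence of Proposition \ref{modqu}, invoking the equivalence stated earlier in this section: $\mod(Y,H)=0$ if and only if the number of $H$-orbits in $Y$ is finite.

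First I would split into cases according to $n$. For $n=1$ Proposition \ref{modqu} asserts that the modality is zero, so finiteness of the orbit set is automatic. For $n=2$, the proposition gives modality $n-2=0$, and again the cited equivalence yields finitely many orbits. For $n\ge 3$, modality $n-2\ge 1$ means there is an irreducible $\Gan$-invariant subvariety of $Q_n$ in which the generic orbit has codimension at least one; such a subvariety necessarily contains infinitely many orbits, hence so does $Q_n$.

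To make the $n\le 2$ direction fully concrete (and to reassure the reader that ``modality zero'' is not vacuous in this small-dimensional regime), I would briefly exhibit the orbit decomposition coming from the explicit description at the end of Section~5. The orbit-closure complement of $\Gan\langle 1\rangle$ in $Q_n$ consists of points $(0:b_1:\ldots:b_n:c)$ with $\sum b_i^2=0$, and two such points are in the same orbit iff their projective coordinates $(b_1:\ldots:b_n)$ agree (the last coordinate is freely shifted by the action whenever some $b_i\ne 0$). For $n=1$ the condition $b_1^2=0$ forces $b_1=0$, leaving only the fixed point $(0{:}0{:}1)$, so $Q_1$ has two orbits. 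For $n=2$ the equation $b_1^2+b_2^2=0$ in $\P^1$ cuts out the two points $(i{:}1)$ and $(-i{:}1)$, producing two one-dimensional orbits, together with the fixed point $(0{:}0{:}0{:}1)$ and the open orbit, for a total of four orbits. For $n\ge 3$ the equation $\sum b_i^2=0$ defines a positive-dimensional quadric in $\P^{n-1}$, giving a continuous family of boundary orbits.

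There is essentially no genuine obstacle here: the entire content of the corollary is packaged into Proposition \ref{modqu} and the definitional equivalence between vanishing modality and finiteness of the orbit set. The only minor point worth being careful about is the boundary case $n=2$, where the dimension computation (an $(n-2)$-dimensional family with $n-2=0$) could in principle be compatible with an infinite set of isolated orbits; the explicit quadratic equation $\sum b_i^2=0$ in $\P^{n-1}$ rules this out.
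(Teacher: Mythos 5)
Your proposal is correct and follows essentially the same route as the paper, which states the corollary as an immediate consequence of Proposition \ref{modqu} together with the definitional equivalence between vanishing modality and finiteness of the orbit set. The explicit orbit counts for $n=1$ (two orbits) and $n=2$ (four orbits) are accurate and a harmless amplification of the boundary-orbit description already given in the paper.
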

 
 For each $n$ the quadric contains the only fixed point which is $(0:\ldots:0:1)$. 
\end{section}

\begin{section}{Maximal commutative subalgebras}
\begin{prop}\label{Autcom} Let $X$ be an irreducible variety and $G\subseteq \Aut(X)$  a finite-dimensional commutative algebraic subgroup which has an open orbit on $X$. Then $G$ is a maximal commutative subgroup in $\Aut(X)$.
\end{prop}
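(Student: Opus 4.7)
The plan is to take an arbitrary commutative subgroup $H \subseteq \Aut(X)$ containing $G$ and show $H = G$ by choosing a base point in the open $G$-orbit and using commutativity to track how any $h \in H$ acts.

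First I would fix a point $x_0$ in the open orbit $U = Gx_0 \subseteq X$. For $h \in H$, the fact that $h$ commutes with every element of $G$ gives $h(Gx_0) = G\,h(x_0)$, so the image $h(U)$ coincides with the $G$-orbit of $h(x_0)$. Since $h$ is an automorphism of $X$, the set $h(U)$ is open and nonempty, so $G$ has (at least) two open orbits, namely $U$ and $h(U)$. Irreducibility of $X$ forces any two nonempty open subsets to intersect, so $U \cap h(U) \ne \emptyset$, and hence these two $G$-orbits must actually be equal. In particular $h(x_0) \in U$, so there exists $g_0 \in G$ with $h(x_0) = g_0\,x_0$.

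Next I would compare $h$ and $g_0$ as automorphisms of $X$. For any $g \in G$, commutativity of $H$ gives
\[
h(g\,x_0) \;=\; g\,h(x_0) \;=\; g\,g_0\,x_0 \;=\; g_0(g\,x_0),
\]
so $h$ and $g_0$ agree pointwise on $U = Gx_0$. Since $X$ is irreducible (hence in particular separated as a variety) and $U$ is a dense open subset, two morphisms $X \to X$ that coincide on $U$ must coincide on all of $X$. Therefore $h = g_0 \in G$, and since $h \in H$ was arbitrary, $H \subseteq G$.

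The argument is short and the only real subtlety is step one, where one has to be careful to use commutativity, not merely normality, to identify $h(U)$ as a $G$-orbit so that irreducibility can be invoked; the standard trap of confusing "normalizer" with "centralizer" is what makes maximality fail in the non-commutative setting. The extension-from-a-dense-set step is routine for varieties and needs no more than separatedness.
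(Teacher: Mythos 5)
Your proof is correct and follows essentially the same route as the paper's: irreducibility forces the translated open orbit $h(Gx_0)$ to meet $Gx_0$, commutativity then shows $h$ agrees with some $g_0\in G$ on the dense open orbit, and density gives $h=g_0$ on all of $X$. (The only quibble is the parenthetical ``irreducible, hence separated'' --- irreducibility does not imply separatedness, but varieties are separated by convention here, so nothing is lost.)
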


\begin{proof} Consider an open orbit $Gx_0\subseteq X$ and suppose that an element $g\in\Aut(X)\setminus~G$ commutes with all elements of $G$. Since $X$ is irreducible, one can see that $gGx_0\cap Gx_0\ne\emptyset$, so there are elements $g_1, g_2\in G$ such that $gg_1x_0=g_2x_0$. Then the element $g'=g_2^{-1}gg_1$ fixes the point $x_0$. We know that $g_1, g_2\in G$ and $g\notin G$, hence $g'\notin G$; in particular, $g'\ne e$. Since the group is commutative, the element $g'$ has to act as the identity on the open orbit $Gx_0$, and on the whole variety $X$. There is a contradiction, so the proposition is proved.
\end{proof}

To each action of the group $\Gan$ on the space $\P^n$ one can assign a commutative unipotent subgroup in $\SL(n+1)$, and to each action on a quadric $Q_n\subset\P^{n+1}$ corresponds a commutative unipotent subgroup in $\SO(n+2)$. If the action is locally transitive, then from Proposition ~\ref{Autcom} it follows that the corresponding subgroup is maximal of commutative unipotent subgroups in $\SL(n+1)$ or $\SO(n+2)$ respectively. Let us pose the converse question: does there exist a maximal commutative unipotent subgroup of dimension $n$ in $\SL(n+1)$ (resp. $\SO(n+2)$) such that the corresponding action is not locally transitive? Paper \cite{S-T} contains the list of all maximal commutative nilpotent subalgebras in $\sl(n+1)$ for $n\le 4$. For each $n$ small enough to each maximal nilpotent subalgebra of dimension $n$  assign a locally transitive action $\Gan:\P^n$. We do not have an answer in general case for special linear algebras. In orthogonal case a counterexample can be obtained from the work \cite{H-W-Z}. Theorem ~\ref{unit} says that there is a unique (up to conjugacy) subalgebra of dimension $n$ in $\so(n+2)$ which corresponds to a locally transitive action $\Gan:Q_n$. In the paper \cite{H-W-Z} there are indicated so called free-rowed maximal commutative nilpotent subalgebras of dimension $n$ in $\so_{n+2}$, where $n\ge 6$. Let us describe these subalgebras and show that  corresponding actions are not locally transitive. Suppose the form $B$ is determined by the matrix
$$\left(\begin{array}{ccc}0&0&E_4\\0&E_{n-6}&0\\E_4&0&0\end{array}\right),$$
and the subalgebra consists of all the matrices 
$$\left(\begin{array}{ccc}0&A&Y\\0&0&B\\0&0&0\end{array}\right),$$
where a matrix  $Y\in\Mat_{4\times 4}$ is antisymmetric, in a matrix $A\in\Mat_{4\times (n-6)}$ all rows but the first one $(m_1,\ldots, m_{n-6})$ are zeros, and in a matrix  $B\in\Mat_{(n-6)\times 4}$ all columns but the first one $(-m_1,\ldots, -m_{n-6})$ are zeros.
It is easy to check that it is a subalgebra in $\so_{n+2}$ and its dimension equals $n$. If $X$ is a matrix of such type, then its square equals to 
$(-m_1^2-\ldots-m_{n-6}^2) E_{1,n-1}$, and its cube is zero, so its exponent is $E+X+\frac{1}{2}(-m_1^2-\ldots-m_{n-6}^2)E_{1, n-1}$. If we multiply any column by this exponent, then its four last coordinates will not change, therefore there is no open orbit for this action.   
\end{section}

\address {Chair of Higher Algebra, Faculty of Mechanics and Mathematics, Moscow
State University, Leninskie Gory, GSP-1, Moscow, 119991, Russia }

\email {e-mail: sharojko@mccme.ru}

\end{document}